\newtheorem{theorem}{Theorem}
\newtheorem{lemma}{Lemma}
\newtheorem{remark}{Remark}
\newtheorem{proposition}{Proposition}
\newtheorem{desired result}{Desired result}
\newtheorem{conjecture}{Conjecture}
\newcommand{\begd}{\begin{displaystyle}}
\newcommand{\gl}{\lambda}
\newcommand{\ga}{\alpha}
\newcommand{\gb}{\beta}
\newcommand{\gd}{\delta}
\newcommand{\gD}{\Delta}
\newcommand{\tr}{\textrm{tr}\,}
\newcommand{\ve}{\varepsilon}
\newcommand{\mbq}{\mathbb{Q}}
\newcommand{\mbr}{\mathbb{R}}
\newcommand{\mbz}{\mathbb{Z}}
\newcommand{\mbf}{\mathbb{F}}
\newcommand{\gal}{\textrm{Gal}\,}
\newcommand{\ol}[1]{\overline{#1}}
\newcommand{\mc}[1]{\mathcal{#1}}
\title{A refined version of the Lang-Trotter Conjecture}
\author{Stephan Baier and Nathan Jones}
\date{}
\begin{document}

\maketitle
\begin{abstract}
Let $E$ be an elliptic curve defined over the rational numbers and $r$ a fixed integer.  Using a probabilistic model consistent with the Chebotarev theorem for the division fields of $E$ and the Sato-Tate distribution, Lang and Trotter conjectured an asymptotic formula for the number of primes up to $x$ which have Frobenius trace equal to $r$, where $r$ is a {\it fixed} integer. However, as shown in this note, this asymptotic estimate cannot hold for {\it all} $r$ in the interval $|r|\le 2\sqrt{x}$ with a uniform bound for the error term, because an estimate of this kind would contradict the Chebotarev density theorem as well as the Sato-Tate conjecture.

The purpose of this note is to refine the Lang-Trotter conjecture, by taking into account the "semicircular law", to an asymptotic formula that conjecturally holds for arbitrary integers $r$ in the interval $|r|\le 2\sqrt{x}$, with a uniform error term. We demonstrate consistency of our refinement with the Chebotarev theorem for a fixed division field, and with the Sato-Tate conjecture.  We also present numerical evidence for the refined conjecture.
\end{abstract}

\section{Introduction} \label{introduction}
Let $E$ be an elliptic curve defined over $\mbq$ of minimal discriminant $\gD_E$.  For any prime number $p$ not dividing $\gD_E$, let $E_p$ denote the reduction of $E$ modulo $p$ and
\[
a_E(p) := p + 1 - \# E_p(\mbz/p\mbz)
\]
the trace of Frobenius at $p$.  For a fixed integer $r$, define the prime-counting function
\[
\pi_{E,r}(x):=\sum_{{\begin{substack} {p \leq x, p \nmid \gD_E \\ a_E(p) = r } \end{substack}}} 1.
\]
By studying a probabilistic model consistent with the Chebotarev density theorem for the division fields of $E$ and the Sato-Tate distribution, Lang and Trotter formulated the following conjecture.
\begin{conjecture} \label{ltconjecture}
(Lang-Trotter) Let $E$ be an elliptic curve over $\mbq$ and $r \in \mbz$ a fixed integer.  If $r = 0$ then assume
additionally that $E$ has no complex multiplication.  Then,
\begin{equation} \label{ltasymp}
\pi_{E,r}(x) \quad = \quad C_{E,r} \int_2^x  \frac{dt}{2\sqrt{t}\log t} \; + \; o\left( \frac{\sqrt{x}}{\log x} \right) = C_{E,r} \cdot \frac{\sqrt{x}}{\log x}+
o\left( \frac{\sqrt{x}}{\log x} \right)
\end{equation}
as $x \rightarrow \infty$, where $C_{E,r}$ is a specific non-negative constant.
\end{conjecture}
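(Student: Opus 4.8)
Since the displayed asymptotic is itself the (still open) Lang--Trotter conjecture, the plan is not to prove it outright but to reconstruct the probabilistic model that produces it and to pinpoint where that model resists being made rigorous. The plan is to write
\[
\pi_{E,r}(x) \;=\; \sum_{\substack{p \le x \\ p \nmid \gD_E}} \mathbf{1}\bigl(a_E(p) = r\bigr),
\]
to treat the $a_E(p)$ as if they were independent random integers whose law is governed on the one hand by the mod-$m$ Galois representations attached to $E$ (the ``horizontal'', congruence-theoretic constraint) and on the other by the Sato--Tate distribution (the ``vertical'' constraint), and then to estimate $\sum_{p \le x}\mathbb{P}\bigl(a_E(p) = r\bigr)$ by linearity of expectation.

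The first step extracts the congruence factor. For a prime $\ell$ and $k \ge 1$, let $G_{\ell^k} \subseteq \mathrm{GL}_2(\mbz/\ell^k\mbz)$ be the image of the mod-$\ell^k$ Galois representation of $E$ and put
\[
\gd_{\ell^k}(r) \;:=\; \frac{\#\{\, g \in G_{\ell^k} \,:\, \tr g \equiv r \pmod{\ell^k} \,\}}{\# G_{\ell^k}} .
\]
By the Chebotarev density theorem for the $\ell^k$-division field, $\gd_{\ell^k}(r)$ is exactly the density of primes $p$ with $a_E(p) \equiv r \pmod{\ell^k}$; measured against the ``default'' value $\ell^{-k}$ this is a correction of size $\ell^k\gd_{\ell^k}(r)$, and one checks that this quantity stabilizes as $k \to \infty$. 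Invoking Serre's open image theorem in the non-CM case, $G_{\ell^k} = \mathrm{GL}_2(\mbz/\ell^k\mbz)$ for all but finitely many $\ell$; for these a direct count in $\mathrm{GL}_2$ gives $\lim_{k}\ell^k\gd_{\ell^k}(r) = 1 + O(\ell^{-2})$, so the Euler product $\prod_\ell\bigl(\lim_k \ell^k\gd_{\ell^k}(r)\bigr)$ converges. (If $r = 0$ and $E$ has complex multiplication by $K$, the image lies in the normalizer of a Cartan subgroup, roughly half of whose elements have trace $0$, so $\ell\,\gd_\ell(0) \asymp \ell$ and the product diverges --- reflecting the fact that then $a_E(p) = 0$ exactly for the density-$\tfrac12$ set of primes inert in $K$, whence $\pi_{E,0}(x) \asymp x/\log x$; this is why that case must be excluded.)

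The second step is the archimedean factor. Writing $a_E(p) = 2\sqrt{p}\,\cos\theta_p$, the Sato--Tate law predicts that the $\theta_p$ equidistribute in $[0,\pi]$ with respect to $\tfrac{2}{\pi}\sin^2\theta\,d\theta$, so $a_E(p)/(2\sqrt{p})$ has limiting density $\tfrac{2}{\pi}\sqrt{1-u^2}$ on $[-1,1]$; since consecutive admissible values of $a_E(p)/(2\sqrt{p})$ lie $1/(2\sqrt{p})$ apart, the ``probability'' that Sato--Tate alone places $a_E(p)$ at the fixed integer $r$ is $\sim \tfrac{1}{2\sqrt{p}}\cdot\tfrac{2}{\pi}$ as $p \to \infty$. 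Treating the Sato--Tate angle and the congruence data at the various $\ell$ as mutually independent, I would multiply the factors to obtain the model
\[
\mathbb{P}\bigl(a_E(p) = r\bigr) \;\approx\; \frac{1}{2\sqrt{p}}\cdot\frac{2}{\pi}\cdot\prod_\ell\Bigl(\lim_{k\to\infty}\ell^k\gd_{\ell^k}(r)\Bigr) \;=\; \frac{C_{E,r}}{2\sqrt{p}},
\]
with $C_{E,r} := \tfrac{2}{\pi}\prod_\ell\bigl(\lim_k \ell^k\gd_{\ell^k}(r)\bigr) \ge 0$ (and $=0$ if some local factor vanishes). Summing over $p \le x$ and using the prime number theorem to pass from $\sum_{p \le x}$ to $\int_2^x (\,\cdot\,)\,\tfrac{dt}{\log t}$ yields $\pi_{E,r}(x) \approx C_{E,r}\int_2^x \tfrac{dt}{2\sqrt{t}\,\log t}$, and an integration by parts gives $\int_2^x \tfrac{dt}{2\sqrt{t}\,\log t} \sim \sqrt{x}/\log x$, which is the stated formula.

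The hard part --- and the reason this is a conjecture and not a theorem --- is that each of these probabilistic leaps is currently unjustifiable. The $a_E(p)$ are not random, and there is no tool that proves the assumed independence between the congruence data at distinct primes $\ell$, let alone its independence from the Sato--Tate angle. Moreover, forcing $a_E(p) = r$ on the nose requires congruence information modulo $m \gtrsim 4\sqrt{p}$, i.e. interchanging $\lim_{k\to\infty}$ with $\sum_{p \le x}$, whereas the error term in Chebotarev for the $m$-division field becomes useless long before $m$ reaches $\sqrt{x}$, even under GRH; and the effective forms of Sato--Tate now available are likewise far too weak to localize $a_E(p)/(2\sqrt{p})$ in a window of width $1/(2\sqrt{p})$. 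Correspondingly, the sharpest known upper bounds for $\pi_{E,r}(x)$, whether unconditional or conditional on GRH, remain substantially larger than the conjectured main term $C_{E,r}\sqrt{x}/\log x$, and for an individual $E$ and a single fixed $r$ no nontrivial lower bound is known.
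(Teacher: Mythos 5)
This statement is the Lang--Trotter conjecture itself: the paper does not prove it (and cannot), it only recalls the heuristic behind it in Section \ref{therefinement} and the precise construction of $C_{E,r}$ in Section \ref{constantsexplicit}. Your decision to reconstruct the probabilistic model rather than claim a proof is therefore the correct reading of the statement, and in the non-CM case your reconstruction agrees with the paper's: the archimedean factor $\phi_E(0)=\tfrac{2}{\pi}$ from Sato--Tate, the congruence factors $\lim_k \ell^k\gd_{\ell^k}(r)$, which by Serre's open image theorem equal $1+O(\ell^{-2})$ for all but finitely many $\ell$ so that the Euler product converges, and the passage from $\sum_{p\le x}$ to $\int_2^x \frac{dt}{2\sqrt t\log t}$ by partial summation; this is exactly the shape of \eqref{cdef} with $\phi_E(0)=\tfrac2\pi$.

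The genuine defect is your treatment of the CM case with $r\neq 0$, which the conjecture (and the paper) explicitly includes. Your formula $C_{E,r}=\tfrac{2}{\pi}\prod_\ell\bigl(\lim_k\ell^k\gd_{\ell^k}(r)\bigr)$, with the densities computed from the Galois image over $\mbq$, breaks down there: for a CM curve the mod-$\ell$ image lies in the normalizer of a Cartan subgroup, and the whole nontrivial coset has trace zero, so for $\ell\nmid r$ one finds
\[
\ell\,\gd_{\ell}(r)\;=\;\frac{\ell\,\bigl(\ell-1-\chi_{\mc{O}}(\ell)\bigr)}{2(\ell-1)\bigl(\ell-\chi_{\mc{O}}(\ell)\bigr)}\;\longrightarrow\;\tfrac12,
\]
so your Euler product diverges to $0$ and your constant vanishes identically. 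The underlying reason is that over $\mbq$ the congruence events at distinct primes $\ell$ are strongly correlated for CM curves: a prime inert in $K$ has $a_E(p)=0$, hence $a_E(p)\equiv 0$ simultaneously modulo every $\ell$, so the prime-by-prime independence you assume is false in a structural (not merely unprovable) way. The paper's definition \eqref{cdef} avoids exactly this by taking $G_E(n)=\gal(K(E[n])/K)\le(\mc{O}/n\mc{O})^*$ and normalizing with $\phi_E(0)=\tfrac{1}{2\pi}$, i.e. Deuring's split/inert factor $\tfrac12$ is extracted once globally rather than at each $\ell$; Lemma \ref{CMcaselemma} is what reconciles the over-$K$ and over-$\mbq$ viewpoints. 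So your heuristic reproduces the paper's constant in the non-CM case, but in the CM case with $r\neq 0$ it does not yield the Lang--Trotter constant, and the asymptotic as you set it up would be false with your $C_{E,r}$.
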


\begin{remark}
It is possible that the constant $C_{E,r} = 0$, in which case we interpret the asymptotic to mean that there are only finitely many primes $p$ for which $a_E(p) = r$.
\end{remark}

We note that if $r=0$ and $E$ has complex multiplication, Deuring \cite{deur} showed that half of the primes $p$ satisfy $a_E(p)=0$, {\it i.e.},
$$
\pi_{E,0}(x)\sim \frac{\pi(x)}{2} \ \ \ \ \mbox{ as } x\rightarrow\infty.
$$
More precisely, for any constant $C>1$, we have
\begin{equation} \label{deuob}
\pi_{E,0}(x)= \frac{1}{2} Li(x)+O\left(\frac{x}{(\log x)^C}\right),
\end{equation}
where the implied $O$-constant depends only on $E$ and $C$, and
\[
Li(x) \; := \; \int_2^x \frac{dt}{\log t} \; \sim \; \frac{x}{\log x} \quad \text{ as } x \rightarrow \infty.
\]
Primes $p$ with $a_E(p)=0$ are known as ``supersingular primes''.

We point out that Conjecture 1 is formulated for {\it fixed} numbers $r$. The purpose of this note is to refine Conjecture \ref{ltconjecture} to an asymptotic formula which conjecturally holds for {\it arbitrary} integers $r$ in the interval $-2\sqrt{x}\le r\le 2\sqrt{x}$, with a uniform error term, where the case $r=0$ is excluded if $E$ has complex multiplication. Our refinement is stated below.

\begin{conjecture} \label{refinement}
Let $E$ be an elliptic curve over $\mbq$. Fix any $C>1$. Then, uniformly for $|r| \leq 2\sqrt{x}$, where the case $r=0$ is excluded if $E$ has CM, we have
\begin{equation} \label{refconj}
\pi_{E,r}(x) \; = \;  C_{E,r}\int_{\max\{2,r^2/4\}}^x \frac{\Phi_E(r/(2\sqrt{t}))}{2\sqrt{t} \log t} dt+ O_{E,C}\left(\frac{\sqrt{x}}{(\log x)^C}\right),
\end{equation}
where $C_{E,r}$ is the same constant appearing in Conjecture 1, and
\begin{equation} \label{phidef2}
\Phi_E(z):=\left\{\begin{array}{llll} \sqrt{1-z^2} & \mbox{\rm if } E\ \mbox{\rm does not have CM} \\ \\
\frac{1}{\sqrt{1-z^2}} & \mbox{\rm if } E\ \mbox{\rm has CM.}
\end{array}\right.
\end{equation}
\end{conjecture}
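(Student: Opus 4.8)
\bigskip

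\noindent\textbf{Proof proposal.} Since \eqref{refconj} is a conjectural assertion rather than a theorem, the ``proof'' I would offer consists of a heuristic derivation of its main term and error term from a refined Lang--Trotter probabilistic model, followed by the two consistency verifications advertised in the abstract. The plan is to keep separate the \emph{vertical} input---the equidistribution of $a_E(p)/(2\sqrt{p})$ in $[-1,1]$, which is the Sato--Tate (semicircular) law when $E$ has no CM and, when $E$ has CM, a point mass of weight $1/2$ at $0$ together with the arcsine law on the remaining ordinary primes---from the \emph{horizontal} input---the equidistribution of $a_E(p)$ in residue classes, controlled by the Chebotarev density theorem for the division fields of $E$---exactly as Lang and Trotter did, but \emph{without} the simplification $\sqrt{1-(r/(2\sqrt{p}))^2}\approx 1$, which is legitimate only for $r$ fixed.

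First I would write down, for each integer $r$ and each prime $p\nmid\gD_E$ with $p\ge r^2/4$, the modelled ``probability'' that $a_E(p)=r$ as a product of two factors. The horizontal factor contributes, at each prime power $\ell^k$, the proportion of matrices in the image of the mod-$\ell^k$ Galois representation with trace $\equiv r\pmod{\ell^k}$; the suitably normalised product of these local densities over all $\ell$ is precisely the constant $C_{E,r}$ of Conjecture \ref{ltconjecture}, and it depends on $r$ only through the residue classes of $r$ modulo prime powers, not through the size of $r$. The vertical factor contributes the mass that the relevant measure on $[-1,1]$ assigns to an interval of length $1/(2\sqrt{p})$ about $r/(2\sqrt{p})$, i.e.\ a fixed constant times $\Phi_E(r/(2\sqrt{p}))/(2\sqrt{p})$, where the two cases of \eqref{phidef2} are exactly the densities $\tfrac{2}{\pi}\sqrt{1-z^2}$ and $\tfrac{1}{\pi\sqrt{1-z^2}}$ of the semicircular and arcsine laws (the constants $\tfrac{2}{\pi}$, resp.\ $\tfrac{1}{2}\cdot\tfrac{1}{\pi}$, being absorbed into $C_{E,r}$, so that $\Phi_E(0)=1$). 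Assuming independence of the two factors and summing the modelled probabilities over primes $r^2/4<p\le x$, I would pass from $\sum_{r^2/4<p\le x} C_{E,r}\,\Phi_E(r/(2\sqrt{p}))/(2\sqrt{p})$ to the integral in \eqref{refconj} by partial summation and the prime number theorem, and I would posit the uniform error $O_{E,C}(\sqrt{x}/(\log x)^C)$ in analogy with the error terms actually provable in the nearest unconditional situations, namely Deuring's formula \eqref{deuob} for supersingular primes of CM curves and the effective Chebotarev theorem for a single division field. Note that for $r$ fixed $\Phi_E(r/(2\sqrt{t}))\to\Phi_E(0)=1$, so \eqref{refconj} specialises to \eqref{ltasymp}: this is the first consistency check, and it also explains why $r=0$ is excluded when $E$ has CM, since there the point mass at $0$ rather than the arcsine density governs $a_E(p)=0$ and Deuring's $\tfrac{1}{2} Li(x)$, of a different order of magnitude, takes over.

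The substantive work is the remaining two consistency statements, and here the main obstacle is controlling $C_{E,r}$ as $r$ varies. For consistency with the Chebotarev theorem for a fixed division field $\mbq(E[m])$, I would fix a class $a\pmod{m}$, sum \eqref{refconj} over all $r\equiv a\pmod{m}$ with $|r|\le 2\sqrt{x}$, and check that the result matches the Chebotarev count of primes $p\le x$ with $a_E(p)\equiv a\pmod{m}$; this reduces to an identity for the weighted sum $\sum_{r\equiv a\,(m)} C_{E,r}$, which I would extract from the Euler product defining $C_{E,r}$ by isolating the factors at the primes $\ell\mid m$. For consistency with Sato--Tate, I would instead sum \eqref{refconj} over $r$ in a window $2\ga\sqrt{x}\le r\le 2\gb\sqrt{x}$, interchange the sum over $r$ with the $t$-integral, and apply the Riemann-sum estimate $\sum_{2\ga\sqrt{t}\le r\le 2\gb\sqrt{t}}\Phi_E(r/(2\sqrt{t}))\approx 2\sqrt{t}\int_{\ga}^{\gb}\Phi_E(z)\,dz$ together with the average value of $C_{E,r}$ over such windows, recovering $\mu_{ST}([\ga,\gb])\cdot\pi(x)$---and, with $\ga=-1$, $\gb=1$, the total $\pi(x)$, which is exactly why the uniform analogue of \eqref{ltasymp} with $\Phi_E$ deleted cannot hold. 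The delicate point, common to both verifications, is that everything rests on assuming the vertical and horizontal models are independent and on the fact that the average of $C_{E,r}$ over the relevant short and long intervals of $r$ genuinely matches the semicircular (resp.\ arcsine) normalisation, so that the two sides balance exactly; for the full range $|r|\le 2\sqrt{x}$ this averaging property is itself essentially forced by the effective Chebotarev theorem, which is the technical heart of the argument.
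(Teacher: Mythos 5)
Your proposal matches the paper's own treatment: since \eqref{refconj} is a conjecture, the paper likewise ``derives'' it by correcting the Lang--Trotter heuristic with the factor $\phi_E(r/(2\sqrt{t}))$ normalised by $1/\phi_E(0)$ (so $\Phi_E(0)=1$), restricting the integration to $[r^2/4,x]$, and positing the error term by analogy with Deuring's formula and the effective Chebotarev theorem, and it then carries out exactly the two consistency verifications you outline (averaging $C_{E,r}$ over residue classes via its Euler product to match Chebotarev, and summing over windows $2\ga\sqrt{x}<r\le 2\gb\sqrt{x}$ to recover the Sato--Tate/arcsine distribution). Your plan is essentially the same as the paper's, with the averaging of $C_{E,r}$ (the paper's Proposition \ref{asymprop}) correctly identified as the technical heart.
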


For convenience, throughout the sequel, we denote the main term on the right-hand side of \eqref{refconj} by $F_{E,r}(x)$, {\it i.e.}, we set
\begin{equation} \label{fdef}
F_{E,r}(x) := C_{E,r} \int_{\max\{2,r^2/4\}}^x \frac{\Phi_E(r/(2\sqrt{t}))}{2\sqrt{t} \log t} dt
\end{equation}
if $x\ge \max\{2,r^2/4\}$. We note that this term is bounded from above by the main term in Conjecture 1, {\it i.e.}
\begin{equation} \label{Fbound}
F_{E,r}(x)\ll C_{E,r}\cdot \frac{\sqrt{x}}{\log x}.
\end{equation}

Conjecture 2 is rather ``conservative'' in the sense that the $O$-term bounding the error is smaller than the main term by no more than a factor of a power of logarithm.
In section 3, we shall give a heuristic suggesting the following sharpening of Conjecture 2 which essentially states that the the error term in \eqref{refconj} should not be much larger than the square root of the main term.

\begin{conjecture} \label{refinement2}
Let $E$ be an elliptic curve over $\mbq$ and $\ve > 0$. Assume that $|r| \leq 2\sqrt{x}$. Assume further that $r\not= 0$ if $E$ has CM. Then
\begin{equation} \label{secondcon}
\pi_{E,r}(x) = F_{E,r}(x)+ O_{E,\ve}\left( x^\ve \sqrt{1 + F_{E,r}(x)}\right),
\end{equation}
where the function $F_{E,r}(x)$ is defined as in \eqref{fdef}.
\end{conjecture}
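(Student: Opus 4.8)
The plan is to give a \emph{heuristic} derivation of \eqref{secondcon} from a probabilistic model refining the one behind Conjectures 1 and 2, in the spirit of Lang and Trotter's original reasoning but keeping track of the second moment. First I would reinterpret Conjecture 2: writing its main term as a sum over primes via the prime number theorem,
\[
F_{E,r}(x) \;=\; \sum_{\max\{2,r^2/4\}\le p\le x} p_r(p) \; + \; O_{E,C}\!\left(\frac{\sqrt{x}}{(\log x)^C}\right), \qquad p_r(p) \;:=\; C_{E,r}\,\frac{\Phi_E\!\left(r/(2\sqrt{p})\right)}{2\sqrt{p}},
\]
one models the indicator of $\{p\nmid\gD_E,\ a_E(p)=r\}$ by a Bernoulli random variable $X_p$ with $\mathbb{E}[X_p]=p_r(p)$, and \emph{assumes the $X_p$ to be independent as $p$ varies}. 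Under this model $\pi_{E,r}(x)$ is replaced by $S(x):=\sum_{p\le x}X_p$, with $\mathbb{E}S(x)=F_{E,r}(x)+O(\text{smaller})$, consistently with Conjecture 2.

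The second step is a concentration estimate for $S(x)$. Since $0\le X_p\le 1$ and $p_r(p)\to 0$, the variance satisfies
\[
\mathrm{Var}\,S(x) \;=\; \sum_{p\le x} p_r(p)\bigl(1-p_r(p)\bigr) \;=\; \bigl(1+o(1)\bigr)F_{E,r}(x)
\]
when $F_{E,r}(x)\to\infty$, and is $O(1)$ otherwise; in all cases $\mathrm{Var}\,S(x)\ll 1+F_{E,r}(x)$. The Bernstein inequality for sums of independent bounded random variables then gives, for $\gl>0$,
\[
\mathbb{P}\bigl(|S(x)-\mathbb{E}S(x)|>\gl\bigr) \;\ll\; \exp\!\left(-\,\frac{c\,\gl^2}{\mathrm{Var}\,S(x)+\gl}\right).
\]
Taking $\gl = x^{\ve}\sqrt{1+F_{E,r}(x)}$ makes the exponent $\gg x^{\ve}$ (split on whether $\gl\gtrless\mathrm{Var}\,S(x)$), so the right-hand side is $\ll\exp(-c'x^{\ve})$, which is summable over the integers $x$. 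By Borel--Cantelli, $|S(x)-\mathbb{E}S(x)|\le x^{\ve}\sqrt{1+F_{E,r}(x)}$ for all large $x$ with probability $1$; transcribing this prediction of the model back to $\pi_{E,r}(x)$ yields \eqref{secondcon}. The ``$1+$'' under the square root is forced by the regime in which $F_{E,r}(x)$ stays bounded --- small $C_{E,r}$, or $|r|$ close to $2\sqrt{x}$ --- where the model is a Poisson-type count of bounded mean whose fluctuations up to $x$ are still absorbed by $x^{\ve}$. (A law-of-the-iterated-logarithm heuristic would suggest the sharper $\sqrt{(1+F_{E,r}(x))\log\log x}$; the $x^{\ve}$ is kept as the conservative choice, paralleling the $(\log x)^{-C}$ in Conjecture 2.) Before trusting the prediction I would run the same consistency checks as for Conjecture 2: the error $O_{E,\ve}(x^{\ve}\sqrt{1+F_{E,r}(x)})$ is dominated by anything the Chebotarev theorem for a fixed division field or Sato--Tate forces, since by \eqref{Fbound} it is $\ll x^{\ve}(C_{E,r}\sqrt{x}/\log x)^{1/2}=o(\sqrt{x}/(\log x)^{C})$ for suitable $\ve$; and in the CM case with $r\neq 0$ one compares against the thin sequence of primes represented by the relevant binary quadratic form.

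The genuine obstacle is that the preceding is a heuristic, not a proof, and \eqref{secondcon} is out of reach of any known method: the $a_E(p)$ form one deterministic sequence attached to the fixed curve, so there is no actual randomness and no actual variance, and establishing the square-root cancellation the model predicts would go far beyond current technology --- even the main term of Conjecture 1 is open, the best unconditional bounds on $\pi_{E,r}(x)$ (Serre, and the sharper bounds under GRH) are of size a power of $x$ strictly larger than the conjectured error, and a rigorous treatment would at minimum demand a quantitative \emph{joint} equidistribution of Frobenius in the division fields and the Sato--Tate measure with near-square-root error, uniformly in the moduli --- precisely the input that is unavailable. The honest deliverable is therefore (i) the internally consistent model above, with the routine Bernstein and Borel--Cantelli steps, and (ii) the verification that its prediction contradicts neither the known Chebotarev equidistribution nor Sato--Tate, which is what motivated introducing $\Phi_E$ in the first place.
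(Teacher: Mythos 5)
Since the statement is a conjecture, the paper offers only a heuristic justification (Section 3): assume the events ``$p$ is prime with $a_E(p)=r$'' are independent, invoke the law of large numbers to predict fluctuations of size about the square root of the main term, and add the $1$ so the bound survives when $C_{E,r}=0$ --- which is exactly the model you set up, merely refined with Bernstein and Borel--Cantelli. Your proposal is correct in spirit and takes essentially the same approach as the paper, including the honest acknowledgment that no actual proof is possible with current technology.
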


Our work is motivated by the natural desire to sum the prime-counting function $\pi_{E,r}(x)$ over $r$ in a fixed residue class and recover the Chebotarev density theorem for the appropriate division field of $E$.
More precisely, fix a modulus $q$ and denote by $\mbq(E[q])$
the $q$-th division field of $E$, i.e. the field obtained by adjoining to $\mbq$ the $x$ and $y$ coordinates of the $q$-torsion points of a given Weierstrass model of $E$.  Fixing a basis
\[
E[q] \simeq \mbz/q\mbz \oplus \mbz/q\mbz
\]
of $E[q]$ over $\mbz/q\mbz$, we may view the Galois group
\[
\gal(\mbq(E[q])/\mbq) \leq GL_2(\mbz/q\mbz)
\]
as a subgroup of $GL_2(\mbz/q\mbz)$.  Finally, let us denote by
\begin{equation*} \label{chebfactordef}
\gd_{a,q} := \frac{ | \{ g \in \gal(\mbq(E[q])/\mbq) : \tr g \equiv a \mod q \} | }{ | \gal(\mbq(E[q])/\mbq) | }
\end{equation*}
the Chebotarev factor.   The Chebotarev density theorem for the field $\mbq(E[q])$ implies that, for any fixed constant $C>1$, we have
\begin{equation} \label{chebotarev}
\sum_{{\begin{substack} {p \leq x, p \nmid q \gD_E \\ a_E(p) \equiv a \mod q } \end{substack}}} 1 \quad = \quad \gd_{a,q} \cdot Li(x) \; + \; O\left( \frac{x}{(\log x)^C} \right).
\end{equation}
We begin by observing that
\[
\sum_{{\begin{substack} {r \equiv a \mod q \\ |r| \leq 2\sqrt{x}} \end{substack}}} \left( \sum_{{\begin{substack} {p \leq x, p \nmid \gD_E \\ a_E(p) = r } \end{substack}}} 1 \right) \quad = \quad \left( \sum_{{\begin{substack} {p \leq x, p \nmid q \gD_E \\ a_E(p) \equiv a \mod q } \end{substack}}} 1 \right) \; + \; O_q(1).
\]
Thus, paying attention only to the main terms in \eqref{chebotarev} and Conjecture \ref{ltconjecture}, and taking \eqref{deuob} in the CM case into account, it is natural to hope that
\[
\left( \sum_{{\begin{substack} {r \equiv a \mod q \\ 0<|r| \leq 2\sqrt{x}} \end{substack}}} C_{E,r} \right) \cdot \frac{\sqrt{x}}{\log x} \quad \sim \quad \left(\gd_{a,q}-\frac{\gamma({E,a,q})}{2}\right) \cdot \frac{x}{\log x},
\]
where
\begin{equation*} \label{epsdef}
\gamma({E,a,q})=\left\{ \begin{array}{llll} 1 & \mbox{ if } E \mbox{ has CM and } a\equiv 0\mod q,\\ \\ 0 & \mbox{ otherwise.}\end{array} \right.
\end{equation*}
However, this is not the case.  In fact, as proved in Section \ref{constantaveraging}, one has the following.
\begin{proposition} \label{asymprop}
Let $A$ be any integer and $B$ any positive integer.  Set $M :=  \max \{ |A|, |A+B| \}$.  Then
\begin{equation} \label{asymptotic}
\sum_{{\begin{substack} {r \equiv a \mod q \\ A < r \leq A+B\\ r\not=0} \end{substack}}} C_{E,r} =
\begin{cases}
\frac{1}{\pi} \cdot \left(\gd_{a,q} - \frac{\gamma({E,a,q})}{2}\right) \cdot B + O_{E} \left( q \cdot \log^{3} M \right) & \text{ if $E$ has CM} \\
\frac{2}{\pi} \cdot \gd_{a,q} \cdot B + O_E(q) & \text{ if $E$ has no CM.}
\end{cases}
\end{equation}
\end{proposition}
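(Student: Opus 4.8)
The plan is to start from the explicit shape of the Lang--Trotter constant and reduce the claim to a mean‑value computation for a multiplicative‑type function over an arithmetic progression. Recall that there is a positive integer $m=m_E$ (a level at which the mod‑$m$ representation, resp.\ the CM structure, governs the entire adelic image) and a bounded exponent $v_0$ so that, in the non‑CM case,
\[
C_{E,r}=\frac{2}{\pi}\,\mathfrak P_E\,\mathfrak b_E(r)\prod_{\substack{\ell\mid r\\ \ell\nmid m}}\Big(1+\tfrac{1}{\ell^{2}-\ell-1}\Big),
\qquad \mathfrak P_E:=\prod_{\ell\nmid m}\Big(1-\tfrac{1}{(\ell-1)^{2}(\ell+1)}\Big),
\]
where the product defining $\mathfrak P_E$ converges absolutely, $\mathfrak b_E(r)$ depends on $r$ only modulo $m^{v_0}$, and the final product depends on $r$ only through the set of primes $\ell\nmid m$ dividing it. This rests on the elementary count of $\{g\in\mathrm{GL}_2(\mbz/\ell^{k}\mbz):\tr g\equiv r\bmod\ell^{k}\}$, for which the normalization $\ell^{k}\gd_{r,\ell^{k}}$ is already constant for $k\ge 1$ and equals $1-\frac{1}{(\ell-1)^2(\ell+1)}$ or $1+\frac{1}{\ell^2-1}$ according as $\ell\nmid r$ or $\ell\mid r$; combined with the open‑image property (for $m\mid n$ the level‑$n$ trace distribution factors over the prime powers dividing $n$) this gives $C_{E,r}=\tfrac2\pi\lim_n n\,\gd_{r,n}$ in the stated product form. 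Writing the last product as a Dirichlet convolution $\sum_{e\mid r}g_E(e)$ over squarefree $e$ coprime to $m$, with $g_E$ multiplicative and $g_E(\ell)=\tfrac{1}{\ell^{2}-\ell-1}=O(\ell^{-2})$, we obtain $C_{E,r}=\tfrac2\pi\mathfrak P_E\sum_{e\mid r}g_E(e)\,\mathfrak b_E(r)$ with $\sum_e|g_E(e)|=O_E(1)$. (The exclusion $r\ne0$ in the proposition is harmless here — $C_{E,0}=O_E(1)$ — and becomes essential only in the CM case.)

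Next I would interchange the finite sums and extract the main term:
\[
\sum_{\substack{r\equiv a\,(q)\\ A<r\le A+B,\ r\ne 0}}C_{E,r}
=\frac2\pi\,\mathfrak P_E\sum_{\substack{e\ \mathrm{sqfree}\\ (e,m)=1}}g_E(e)\sum_{\substack{r\equiv a\,(q),\ e\mid r\\ A<r\le A+B,\ r\ne 0}}\mathfrak b_E(r).
\]
For each $e$ the inner summand, as a function of $r$, is periodic modulo $P_e:=\lcm(e,q,m^{v_0})$, is supported on at most $O_E(q)$ residue classes mod $P_e$, and the inner sum is nonzero only if $\gcd(e,q)\mid a$ and $e\le M$; hence it equals $\tfrac{B}{P_e}\Sigma_e+O_E(q)$, where $\Sigma_e$ is the sum of $\mathfrak b_E$ over the relevant residues. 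Summing against $g_E(e)$ and using $\sum_e|g_E(e)|=O_E(1)$ collapses the error to $O_E(q)$ and leaves the main term $\frac2\pi\mathfrak P_EB\sum_e g_E(e)\Sigma_e/P_e$, which — by reversing the convolution — is exactly $\frac2\pi B$ times the mean value $\lim_{T\to\infty}\frac1{2T}\sum_{0<|b|\le T,\ b\equiv a\,(q)}\tfrac\pi2 C_{E,b}$.

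The crux is to identify that mean value with $\gd_{a,q}$. I would do this by a second computation on the Galois side. Fix $n$ a multiple of $m^{v_0}q$ divisible by every prime $\le L$. Because $\ell^{k}\gd_{b,\ell^{k}}$ is constant in $k$ and depends only on whether $\ell\mid b$, one has $\tfrac\pi2 C_{E,b}=\mathfrak c(b)=n\,\gd_{b,n}\big/\!\prod_{\ell\nmid n}\mathfrak c_\ell^{+}$ for every $b$, with $\prod_{\ell\nmid n}\mathfrak c_\ell^{+}\to1$ as $L\to\infty$ uniformly in $b$; meanwhile $\sum_{b\equiv a\,(q)\bmod n}\gd_{b,n}=\gd_{a,q}$, which is nothing but the statement that $\gal\,\mbq(E[n])/\mbq\twoheadrightarrow\gal\,\mbq(E[q])/\mbq$ has equidistributed fibres, whence $\lim_T\frac1{2T}\sum_{0<|b|\le T,\ b\equiv a(q)}n\,\gd_{b,n}=\gd_{a,q}$ for each fixed $n$. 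Letting $T\to\infty$ first and then $L\to\infty$ yields the non‑CM formula $\frac2\pi\gd_{a,q}B+O_E(q)$. The CM case follows the same outline, but the level‑$n$ trace distribution no longer factors over $\ell$ (the $\ell$‑adic representations are all tied to a single Hecke character), so one instead uses the description of $C_{E,b}$ for $b\ne0$ furnished by that Hecke character — essentially the classical count of primes represented by the relevant binary quadratic form together with its local densities — and sums it over $b\equiv a\,(q)$ to recover the density of primes split in the CM field $K$ lying in the progression; this is where the constant $\tfrac1\pi$ appears (ordinary primes have Dirichlet density $\tfrac12$ in $K$) and where the correction $-\tfrac{\gamma(E,a,q)}{2}$ enters, accounting for the inert primes, which satisfy $a_E(p)=0\equiv0\bmod q$ and so are weighed into $\gd_{0,q}$ yet contribute nothing to the sum over $r\ne0$.

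The main obstacle is this third step in the CM case: the lack of a clean Euler product over the division‑field primes forces one to argue through the Hecke character and to peel off the ordinary and supersingular contributions explicitly — this is simultaneously the origin of the $-\gamma(E,a,q)/2$ term and the part most sensitive to how $q$ interacts with the conductor of $E$. A secondary, purely technical difficulty is the CM error term: there the analogue of $g_E(\ell)$ decays only like $O(\ell^{-1})$ (the relevant local ratios are controlled by a Cartan subgroup rather than by all of $\mathrm{GL}_2$), so $\sum_{e\le M}|g_E(e)|$ is no longer bounded, and getting the stated $O_E(q\log^{3}M)$ rather than a larger power of $\log M$ requires organizing the estimate around the correct nested divisor variables and using that the split primes of $K$ have density $\tfrac12$.
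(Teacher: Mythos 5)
Your non-CM argument is essentially the paper's proof: write $C_{E,r}$ as a level-$m_E$ factor times an Euler product over $\ell\nmid m_E$, expand the $r$-dependent product as a Dirichlet convolution with multiplicative coefficients $g(\ell)=O(\ell^{-2})$, average over the progression using periodicity, and identify the resulting constant with $\delta_{a,q}$ via compatibility of the trace densities at level $n$ with those at level $q$ (the paper does this by choosing $m_E$ divisible by $q$ and using $\sum_{b\bmod m_E,\ b\equiv a\ (q)}|G_E(m_E)_b|/|G_E(m_E)|=|G_E(q)_a|/|G_E(q)|$, together with the cancellation $C\cdot\sum_d g(d)/d=1$). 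That part is fine in outline.

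The CM case is where there is a genuine gap. First, your premise that "the level-$n$ trace distribution no longer factors over $\ell$" is not correct in the relevant sense: the Lang--Trotter CM constant is \emph{defined} by the same product formula \eqref{cdef}, with $H_E(\ell)=(\mathcal{O}/\ell\mathcal{O})^*$ and local factors $|H_E(\ell)|=(\ell-1)(\ell-\chi_{\mathcal O}(\ell))$, $|H_E(\ell)_r|=\ell-1$ or $\ell-1-\chi_{\mathcal O}(\ell)$; so the same convolution argument goes through verbatim, the only analytic change being that $g(\ell)=\chi_{\mathcal O}(\ell)/(\ell-1-\chi_{\mathcal O}(\ell))=O(1/\ell)$, whence the truncation $d\le M$ and the bound $|g(d)|\ll(\log d)^2/d$ produce the $O_E(q\log^3M)$ error (no input about densities of split primes is needed). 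Second, the substitute you propose---summing, over $b\equiv a\ (q)$, "the classical count of primes represented by the relevant binary quadratic form" to "recover the density of primes split in $K$ in the progression"---cannot prove the proposition: the statement is purely about the constants $C_{E,r}$ and contains no primes, and the prime counts $\pi_{E,b}(x)$ for varying $b\neq0$ in the CM case are themselves only conjectural, so such a route is either circular or unavailable. What is actually needed, and what your sketch omits, is the group-theoretic identity $|G_E(q)_a|/|G_E(q)|=2\bigl(\delta_{a,q}-\gamma(E,a,q)/2\bigr)$ comparing trace densities in $\mathrm{Gal}(K(E[q])/K)$ (which is what enters $C_{E,r}$) with $\delta_{a,q}$ defined over $\mbq$; your remark that inert Frobenius classes have trace $0$ is the right intuition for the coset $\mathrm{Gal}(\mbq(E[q])/\mbq)\setminus\mathrm{Gal}(\mbq(E[q])/K)$, but it must be turned into a proof, including the degenerate case $K\not\subseteq\mbq(E[q])$, which forces $q\in\{1,2\}$ and requires showing that $\mathrm{Gal}(\mbq(E[2])/\mbq)$ is never cyclic of order $3$ for a CM curve over $\mbq$ (the paper does this via an explicit discriminant computation over the thirteen CM $j$-invariants). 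Without that lemma the factor $1/\pi=\lambda_E\,\phi_E(0)$ and the correction $-\gamma(E,a,q)/2$ are asserted, not derived.
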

It follows that
\begin{equation} \label{discrepancy}
\left( \sum_{{\begin{substack} {r \equiv a \mod q \\ 0<|r| \leq 2\sqrt{x}} \end{substack}}} C_{E,r} \right) \cdot \frac{\sqrt{x}}{\log x} \quad \sim \quad \frac{1}{\gl}_E\cdot \frac{8}{\pi} \cdot \left(\gd_{a,q} - \frac{\gamma({E,a,q})}{2}\right) \cdot \frac{x}{\log x}
\end{equation}
as $x \rightarrow \infty$, where
\[
\gl_E :=
\begin{cases}
	2 & \text{ if $E$ has CM} \\
	1 & \text{ if $E$ has no CM.}
\end{cases}
\]

Hence, the conjectural asymptotic estimate \eqref{ltasymp} cannot hold for all $r$ in the interval $|r|\le 2\sqrt{x}$ with a uniform bound for the error term of size $o(\sqrt{x}/\log x)$.
We shall further see that \eqref{ltasymp} with a uniform bound for the error term also contradicts the Sato-Tate conjecture (this follows from Theorem \ref{satotateth}, proved in section \ref{consistencywithsatotate}). We shall show that our refined  Conjecture 2 (resp. Conjecture 3) remedies these discrepancies. Moreover, in section 7 we shall demonstrate that in a certain sense, the main term in Conjectures 2 (resp. Conjecture 3) is the only possibility.

The paper is organized as follows.
In Section \ref{therefinement}, we motivate the Conjectures 2 and 3.
We also discuss briefly in which regions of the $(x,r)$-plane our main term differs significantly from that in Conjecture \ref{ltconjecture} and under which circumstances \eqref{refconj} (resp. \eqref{secondcon}) is actually an {\it asymptotic} estimate. In Section \ref{constantsexplicit}, we give a detailed description of the constants $C_{E,r}$. In Section \ref{constantaveraging}, we provide a proof of Proposition \ref{asymprop} which will serve as a key tool in what follows. In Section \ref{consistencywithchebotarevdensity}, we prove that Conjecture \ref{refinement} is consistent with the Chebotarev density theorem, and
in Section \ref{consistencywithsatotate}, we demonstrate the consistency with the distribution of $a_E(p)/(2\sqrt{p}) \in [-1,1]$.  Finally, in Section \ref{numericalevidence}, we present numerical evidence for Conjecture \ref{refinement}.

\section{Acknowledgments} \label{acknowledgments}
We would like to thank A. Granville for helpful comments on an earlier version and J. Fearnley for advice regarding the numerical computations. Moreover, we wish to thank the referee for many valuable comments.

\section{The refinement} \label{therefinement}
The work of Lang-Trotter takes account of algebraic and analytic information in coming up with the factor $C_{E,r}$. However, the analytic part of their heuristic replaces the ``semi-circular law'' of Sato-Tate with a limiting constant value. This works well for fixed (or small) $r$'s, as considered in their work.  However, when we consider arbitrary $r$'s in the interval $-2\sqrt{x}\le r\le 2\sqrt{x}$, it becomes necessary to introduce an analytic factor corresponding to the Sato-Tate distribution in the non-CM case and to another characteristic distribution in the CM case.

Roughly speaking, the heuristics of Lang and Trotter predict that the probability that a large natural number $p$ is prime and satisfies
$a_E(p) = r$ is
\begin{equation} \label{ltheuristic}
\approx C_{E,r} \cdot \frac{1}{2\sqrt{p}\log p}.
\end{equation}
Thus, one expects that
\[
\pi_{E,r}(x)=C_{E,r}\cdot \sum_{2\leq n \leq x} \frac{1}{2\sqrt{n}\log n} + o\left(\frac{\sqrt{x}}{\log x}\right)=C_{E,r} \int_2^x  \frac{dt}{2\sqrt{t}\log t}+ o\left(\frac{\sqrt{x}}{\log x}\right),
\]
as $x\rightarrow \infty$. We note that
\[
\int_2^x  \frac{dt}{2\sqrt{t}\log t} \sim \frac{\sqrt{x}}{\log x}, \quad \mbox{as } x\rightarrow \infty.
\]

To be precise, the reason for the apparent inconsistency \eqref{discrepancy} between Conjecture \ref{ltconjecture} and \eqref{chebotarev} is twofold:
\begin{enumerate}
\item[R1]{When $r$ is not very small compared with $\sqrt{x}$, the heuristic \eqref{ltheuristic} needs to be corrected by a factor accounting for the distribution of
\[
\frac{a_E(p)}{2\sqrt{p}} \in [-1,1].
\]
}
\item[R2]{since $\pi_{E,r}(x)$ only counts primes $p$ which are $\geq r^2/4$, the interval of integration in Conjecture \ref{ltconjecture} should be $[r^2/4,x]$ rather than $[2,x]$.}
\end{enumerate}
Note that for fixed $r$ and large $x$, neither of these observations affect the asymptotic.

\subsection{The distribution of $a_E(p)/(2\sqrt{p}) \in [-1,1]$}

The appropriate measure for equidistribution of the quantity
\[
\frac{a_E(p)}{2\sqrt{p}} \in [-1,1]
\]
is $\phi_E(z)dz$, where $\phi_E(z)$ is defined by
\begin{equation} \label{phidef}
\phi_E(z):=\left\{\begin{array}{llll} \frac{2}{\pi} \sqrt{1-z^2} & \mbox{\rm if } E\ \mbox{\rm does not have CM} \\ \\
\frac{1}{2 \pi} \cdot \frac{1}{\sqrt{1-z^2}} & \mbox{\rm if } E\ \mbox{\rm has CM.}
\end{array}\right.
\end{equation}
In the CM case, this distribution law is a classical theorem of Deuring \cite{deur}.

\begin{theorem} (Deuring) \label{deuringsthm}
Suppose that $K$ is an imaginary quadratic field and that $E$ has complex multiplication by an order in $K$, i.e. that
\[
\text{End}_{\ol{\mbq}}(E) \otimes \mbq \simeq K.
\]
Then for any prime number $p$ of good reduction for $E$, we have
\[
a_E(p) = 0 \; \Longleftrightarrow \; p \text{ is inert in } K.
\]
Furthermore, if $I \subset [-1,1]$ is some interval with $0 \notin I$, then
\begin{equation} \label{cmdistr}
\lim_{x \rightarrow \infty} \frac{| \{ p \leq x : p \nmid \gD_E, \; \frac{a_E(p)}{2\sqrt{p}} \in I \} | }{\pi(x)} = \int_I \phi_E(z)dz,
\end{equation}
where
\[
\phi_E(z) = \frac{1}{2 \pi} \cdot \frac{1}{\sqrt{1-z^2}}.
\]
\end{theorem}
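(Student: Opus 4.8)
The plan is to establish the two assertions separately: the dichotomy $a_E(p)=0 \Longleftrightarrow p$ inert in $K$ by the reduction theory of complex multiplication, and the limiting distribution by Hecke's equidistribution theorem for Gr\"ossencharacters of $K$. For the dichotomy, I would first discard the finitely many primes that divide $\gD_E$, divide the conductor of the order $\mc{O}:=\text{End}_{\ol{\mbq}}(E)$, or lie below $5$; these do not affect the density statement. For the remaining $p$, Deuring's reduction theorem applies: if $p$ is inert in $K$ then $E_p$ is supersingular, so $p\mid a_E(p)$, and since $|a_E(p)|\le 2\sqrt{p}<p$ this forces $a_E(p)=0$; if $p=\mathfrak{p}\ol{\mathfrak{p}}$ splits, then $E_p$ is ordinary, $\text{End}(E_p)\otimes\mbq\simeq K$, and the Frobenius endomorphism corresponds under this isomorphism to an element $\pi_p\in\mc{O}$ with $\pi_p\ol{\pi_p}=p$, whence $a_E(p)=\pi_p+\ol{\pi_p}$. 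This last quantity cannot vanish, for otherwise $\pi_p$ would be purely imaginary and the ideal $(\pi_p)$ would be stable under complex conjugation, contradicting that $p$ splits.

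Next I would set up the analytic machinery. To the CM curve one attaches a Hecke Gr\"ossencharacter $\psi$ of $K$ of infinity type $1$ (so that $L(E/\mbq,s)=L(\psi,s)$), characterized via the main theorem of complex multiplication by the property that for every split prime $p=\mathfrak{p}\ol{\mathfrak{p}}$ of good reduction one has $\psi(\mathfrak{p})=\pi_p$, with $|\psi(\mathfrak{p})|=\sqrt{p}$ and $\psi(\mathfrak{p})+\psi(\ol{\mathfrak{p}})=a_E(p)$. Writing $\psi(\mathfrak{p})=\sqrt{p}\,e^{i\theta_{\mathfrak{p}}}$ converts the condition $a_E(p)/(2\sqrt{p})\in I$ into $\cos\theta_{\mathfrak{p}}\in I$, and since $\theta_{\ol{\mathfrak{p}}}=-\theta_{\mathfrak{p}}$ the relevant object is the folded angle $|\theta_{\mathfrak{p}}|\in[0,\pi]$.

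I would then invoke Hecke's theorem: for every nonzero integer $k$ the $L$-function $L(\psi^k,s)$ extends to a holomorphic function on $\text{Re}(s)\ge 1$ that does not vanish on the line $\text{Re}(s)=1$. Feeding these non-vanishing statements into a Wiener--Ikehara / Weyl-sum argument annihilates every nontrivial Fourier mode $e^{ik\theta}$ and yields equidistribution of the folded angles $|\theta_{\mathfrak{p}}|$ (as $p$ ranges over split primes $\le x$) with respect to the uniform measure $d\theta/\pi$ on $[0,\pi]$; pushing this forward along $z=\cos\theta$ produces the density $\tfrac{1}{\pi}(1-z^2)^{-1/2}$ on $[-1,1]$. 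Combining this with Dirichlet's theorem for the quadratic character of $K/\mbq$, which gives that the split primes have natural density $\tfrac12$ among all primes, and with the fact that every inert prime satisfies $a_E(p)=0\notin I$ (since $0\notin I$), I obtain
\[
\lim_{x\to\infty}\frac{|\{p\le x:\ p\nmid\gD_E,\ a_E(p)/(2\sqrt{p})\in I\}|}{\pi(x)}=\frac12\cdot\frac1\pi\int_I\frac{dz}{\sqrt{1-z^2}}=\int_I\phi_E(z)\,dz.
\]

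The hard part is the analytic input: establishing, after Hecke, that $L(\psi^k,s)\ne 0$ on $\text{Re}(s)=1$ for all $k\ne 0$ and extracting from it an honest equidistribution statement for the angles. The other delicate (though entirely standard) point is the identification of $\psi(\mathfrak{p})$ with the geometric Frobenius via the main theorem of complex multiplication. Everything else — the reduction dichotomy of the first step and the bookkeeping with the finitely many bad primes, the units of $\mc{O}$, and the conductor — is routine.
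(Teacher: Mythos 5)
The paper does not actually prove this statement: it is quoted as a classical theorem and attributed to Deuring \cite{deur}, so there is no internal proof to compare against; your outline --- Deuring's reduction criterion for the dichotomy (inert $\Rightarrow$ supersingular $\Rightarrow$ $p\mid a_E(p)$ with $|a_E(p)|<p$; split $\Rightarrow$ $a_E(p)=\pi_p+\ol{\pi_p}\neq 0$ by unique factorization of $(\pi_p)$), plus Hecke's equidistribution theorem for the angles of the Gr\"ossencharacter attached to $E$, folded to $[0,\pi]$ and pushed forward under $z=\cos\theta$, weighted by the density $\tfrac12$ of split primes --- is exactly the standard classical argument behind that citation, and the constants come out correctly ($\tfrac12\cdot\tfrac1\pi(1-z^2)^{-1/2}=\phi_E(z)$). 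The only slight mismatch is that the theorem asserts the dichotomy for \emph{every} prime of good reduction, while you discard $p<5$ and primes dividing the conductor of the order; this is harmless for the density claim, and in fact the dichotomy does hold at those primes as well (e.g.\ via the Euler-factor identity $L(E/\mbq,s)=L(\psi,s)$), while ramified primes of $K$ cause no trouble because $|d_K|$ divides the conductor of $E$, so they are never primes of good reduction.
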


In the non-CM case, the distribution law was conjectured independently by Sato and Tate (see \cite{Tate}).

\begin{conjecture} \label{stconjecture}  (Sato-Tate)
For an elliptic curve $E$ over $\mbq$ without complex multiplication and any subinterval $I \subseteq [-1, 1]$, we have
\begin{equation*} \label{satotate}
\lim_{x \rightarrow \infty} \frac{ | \{ p \leq x : p \nmid \gD_E, \frac{a_E(p)}{2\sqrt{p}} \in I \} | }{\pi(x)} \sim \int_I \phi_E(z) dz,
\end{equation*}
where $\phi_E(z)=\frac{2}{\pi} \sqrt{1-z^2}$.
\end{conjecture}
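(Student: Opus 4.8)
The plan is to reduce this equidistribution statement to the analytic behaviour of the symmetric power $L$-functions attached to $E$, following Tate's reformulation of the problem. Writing $a_E(p) = 2\sqrt{p}\cos\theta_p$ with $\theta_p \in [0,\pi]$, the substitution $z = \cos\theta$ identifies the measure $\phi_E(z)\,dz = \frac{2}{\pi}\sqrt{1-z^2}\,dz$ on $[-1,1]$ with the pushforward of $\frac{2}{\pi}\sin^2\theta\,d\theta$ on $[0,\pi]$, which is in turn the image under the trace map of Haar measure on $SU(2)$. The functions $U_n(\cos\theta) = \sin\big((n+1)\theta\big)/\sin\theta$ for $n\ge 0$ (Chebyshev polynomials of the second kind, equivalently the characters of the irreducible representations $\mathrm{Sym}^n$ of $SU(2)$) form an orthonormal basis for $L^2$ of this measure and a uniformly dense subspace of $C[0,\pi]$; hence, by the Weyl equidistribution criterion, it suffices to prove that for each fixed $n \ge 1$,
\[
\sum_{\substack{p \le x \\ p \nmid \gD_E}} U_n(\cos\theta_p) \;=\; o\!\left(\frac{x}{\log x}\right) \qquad (x \to \infty).
\]

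Next I would connect this character sum to the $n$-th symmetric power $L$-function $L(s,\mathrm{Sym}^n E)$, whose Euler factor at a good prime $p$ is $\prod_{j=0}^{n}\big(1 - e^{i(n-2j)\theta_p}\,p^{-s}\big)^{-1}$ in the unitary normalisation (using $|\alpha_p| = |\beta_p| = \sqrt{p}$, $\alpha_p\beta_p = p$, $\alpha_p + \beta_p = a_E(p)$ for the Frobenius eigenvalues). A computation of the logarithmic derivative shows that $-\tfrac{L'}{L}(s,\mathrm{Sym}^n E)$ equals $\sum_p (\log p)\,U_n(\cos\theta_p)\,p^{-s}$ plus a Dirichlet series that converges absolutely and is bounded on $\mathrm{Re}(s) > 1/2$. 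Consequently, if $L(s,\mathrm{Sym}^n E)$ extends to a function holomorphic on $\mathrm{Re}(s) \ge 1$ and non-vanishing on the line $\mathrm{Re}(s) = 1$, then a standard Tauberian argument of the kind used to prove the prime number theorem delivers precisely the cancellation $\sum_{p \le x} U_n(\cos\theta_p) = o(x/\log x)$ required above, and the desired equidistribution follows.

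The remaining task — and this is by far the main obstacle — is to establish those analytic properties for every $L(s,\mathrm{Sym}^n E)$. For $n=1$ this is the modularity of $E$ together with the classical non-vanishing of $L(s,E)$ on $\mathrm{Re}(s)=1$, and for $n=2,3,4$ it follows from the functoriality results of Gelbart--Jacquet, Kim--Shahidi and Kim. In general the required input is the potential automorphy of the symmetric powers of the automorphic representation attached to $E$, established by Taylor and collaborators through modularity lifting theorems over suitable totally real fields (with an auxiliary use of Calabi--Yau families to produce the necessary congruences). Granting potential automorphy, one writes $L(s,\mathrm{Sym}^n E)$, via Brauer induction and base change, as a product of automorphic $L$-functions of $GL_2$-type over totally real fields; holomorphy on $\mathrm{Re}(s)\ge 1$ then follows because the symmetric powers remain cuspidal (as $E$ has no CM), and non-vanishing on $\mathrm{Re}(s)=1$ reduces to the Jacquet--Shalika bound and Rankin--Selberg theory for such $L$-functions. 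The reduction carried out in the first two paragraphs is entirely formal; all the difficulty is concentrated in this potential-automorphy step.
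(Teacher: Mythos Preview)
The paper does not give a proof of this statement at all: it is presented as Conjecture~\ref{stconjecture}, with a remark that it has been established by Clozel, Harris, Shepherd-Barron and Taylor under the hypothesis of multiplicative reduction at some prime, and a pointer to \cite{Tayl}. So there is no ``paper's own proof'' to compare against; the paper only quotes the result.

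Your outline is precisely the standard CHST approach that the paper is citing: reduce via Weyl's criterion to showing $\sum_{p\le x} U_n(\cos\theta_p) = o(x/\log x)$ for all $n\ge 1$, identify this with holomorphy and non-vanishing of $L(s,\mathrm{Sym}^n E)$ on $\mathrm{Re}(s)\ge 1$, and then invoke potential automorphy plus Brauer induction/base change to obtain those analytic properties. The sketch is accurate as far as it goes. Two small caveats: first, at the time of the cited work the conclusion required $E$ to have multiplicative reduction at some prime (to produce a Steinberg local component), so your unconditional phrasing is anachronistic unless you also invoke the later Barnet-Lamb--Geraghty--Harris--Taylor extension; second, the Brauer-induction step is slightly more delicate than ``holomorphy then follows'', since Brauer's theorem may introduce negative exponents and one must rule out unwanted poles by an auxiliary positivity/Rankin--Selberg argument. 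With those refinements your proposal is a faithful summary of the proof the paper references but does not reproduce.
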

We note that the Sato-Tate conjecture has been proved by L. Clozel, M. Harris, N. Shepherd-Barron and R. Taylor for all elliptic curves $E$ over totally real fields (in particular, over the rationals) satisfying the mild condition of having multiplicative reduction at some prime (see \cite{Tayl} and the references therein).

\subsection{Modifying the heuristic}

Observation R1 and the above facts on the distribution of $a_E(p)/(2\sqrt{p})$ suggest that the heuristic \eqref{ltheuristic} should be corrected by the factor $\phi_E(r/(2\sqrt{p}))$ and then be normalized by an appropriate constant factor $\mathfrak{C}$ which we specify later.  Hence, the probability that a large natural number $p$ is a prime with $a_E(p) = r$ should be
\[
\sim \mathfrak{C} \cdot C_{E,r}\frac{\phi_E(r/(2\sqrt{p}))}{2\sqrt{p} \log p}.
\]
This modified heuristic, taken together with observation R2, suggests that the prime counting function $\pi_{E,r}(x)$ behaves approximately like
$$
\mathfrak{C} \cdot C_{E,r}\int_{\max\{2,r^2/4\}}^x \frac{\phi_E(r/(2\sqrt{t}))}{2\sqrt{t} \log t} dt.
$$
For this approximation to be consistent with Conjecture 1, the normalization factor must be $\mathfrak{C}=1/\phi_E(0)$. This leads us to the main term in Conjecture 2 upon noting that $\Phi_E(z)=\phi_E(z)/\phi_E(0)$.

Furthermore, taking the bound \eqref{Fbound} and the order of magnitude of the $O$-term in \eqref{chebotarev} into account, it seems reasonable to conjecture that the error in our approximation of $\pi_{E,r}(x)$ is smaller than $\sqrt{x}/\log x$ by at least a factor of $(\log x)^C$, which gives the error term in Conjecture 2. 

We are not only interested in the correct form of the main term in the approximation of $\pi_{E,r}(x)$ but also in the true order of magnitude of the error term. We note that, assuming the generalized Riemann hypothesis, the true order of magnitude of the error term in \eqref{chebotarev} is $O\left(x^{1/2+\varepsilon}\right)$ which is by almost a factor of $\sqrt{x}$ smaller than $x/(\log x)^C$. Similarly,
a much sharper bound than $O(\sqrt{x}/(\log x)^C)$ should hold for the error term in \eqref{ltasymp}. Indeed, if we assume that the events ``$p$ is prime with $a_E(p) = r$'' are independent as $p$ runs over the natural numbers, then Chebyshev's law of large numbers suggests that the error term should not be much larger than the square root of the main term. This leads us to the error term
\begin{equation} \label{errorterm}
O_{E,\varepsilon}\left(x^\ve \sqrt{1 + F_{E,r}(x)}\right)
\end{equation}
in Conjecture 3.
The reason we include the term $1$ in the error bound is so that the statement continues to hold true even when $C_{E,r} = 0$.

We point out that the implied constant in \eqref{errorterm} cannot be independent of the elliptic curve $E$. To see this, pick any large integer $r$ and
then for any prime $p$ in the range $r^2 /4 < p < x$, find $E_p$ (mod $p$) such that $a_p(E_p ) = r$. Then select an elliptic curve $E$ over $\mbq$ with $E \equiv E_p$ (mod $p$) for all such $p$. Hence, if one desires to state a conjecture which is uniform in $E$, one certainly needs to bring into the error term some information about $E$. It is conceivable that one might replace \eqref{errorterm} with
\[
O_{\ve}\left( (N_E \cdot x)^\ve \sqrt{1 +F_{E,r}(x)}\right),
\]
where $N_E$ is the conductor of $E$.

\subsection{Comparison of the main term and error term} \label{maintermanderrorterm}
In the following, we compare the sizes of the main term $F_{E,r}(x)$ and the error terms in \eqref{refconj} and \eqref{secondcon}, respectively.  We begin with some comments on the constants (see Section \ref{constantsexplicit} for details).

If $C_{E,r}=0$, then it is conjectured that only finitely many primes satisfy $a_E(p)=r$ in which case \eqref{refconj} is trivial. In the following, we assume that $C_{E,r}\not=0$.
If $E$ has CM by an order in an imaginary quadratic field $K$, then (as we will see in Section \ref{constantsexplicit}) the nonzero values of the constant satisfy the bound
\begin{equation} \label{CMcasebounds}
C_{E,r} \neq 0 \; \Longrightarrow \; \forall \, r, \; \frac{1}{\log \log (3+|r|)} \ll_E C_{E,r} \ll_E \log \log (3+|r|).
\end{equation}
It follows that in this case the main term $F_{E,r}(x)$ satisfies the bound
$$
\frac{\sqrt{4x-r^2}}{\log x \cdot \log\log (3+|r|)}\ll_E F_{E,r}(x)\ll_E
\frac{\sqrt{4x-r^2}}{\log x} \cdot \log\log (3+|r|).
$$
If $E$ has no CM, then the nonzero values of the constant $C_{E,r}$ are uniformly bounded from below and above as $r$ varies, i.e. there exist positive constants $c_E$ and $C_E$ for which
\begin{equation} \label{nonCMcasebounds}
C_{E,r} \neq 0 \; \Longrightarrow \; \forall \, r, \; c_E \leq C_{E,r} \leq C_E.
\end{equation}
Hence, in this case the main term in $F_{E,r}(x)$ satisfies the bound
$$
F_{E,r}(x)\asymp_E \frac{(4x-r^2)^{3/2}}{x\log x}.
$$

Now, let $B>0$ be arbitrarily given. By the above observations, if the constant $C$ is chosen large enough, then the error term $O\left(\sqrt{x}/(\log x)^C\right)$ in \eqref{refconj} is small compared to the main term if $|r|\le 2\sqrt{x}\left(1-1/(\log x)^B\right)$ and $x$ is sufficiently large. Hence, in this case, \eqref{refconj} is an asymptotic estimate. In all other cases, \eqref{refconj} implies an estimate for $\pi_{E,r}(x)$ which is still non-trivial.

Similarly, \eqref{secondcon} is an asymptotic estimate if $x$ is large and $|r|\le 2\sqrt{x}\left(1-x^{-\delta}\right)$, where $\delta$ is a fixed positive number satisfying $\delta<1$ in the CM case and $\delta<1/3$ in the non-CM case (provided $\varepsilon$ is chosen small enough).

We also note that \eqref{refconj} as well as \eqref{secondcon} imply \eqref{ltasymp} if
$r=o(\sqrt{x})$. Hence, Conjecture 1 is contained in Conjecture 2 as well as in Conjecture 4.
If $|r|>D\sqrt{x}$ for some fixed positive $D$, then the main term in \eqref{ltasymp} is significantly larger than the main term $F_{E,r}(x)$ in \eqref{refconj} and \eqref{secondcon}.

\section{The constants $C_{E,r}$} \label{constantsexplicit}
We now give a description of the constants $C_{E,r}$. The reader may find more details in \cite{langtrotter}. 

We first introduce the notation
\begin{equation*} \label{defofGEN}
G_E(n) :=
\begin{cases}
\gal(K(E[n])/K)  & \text{ if $E$ has CM by the imaginary quadratic field $K$} \\
\gal(\mbq(E[n])/\mbq) & \text{ if $E$ has no CM.}
\end{cases}
\end{equation*}
We further set
\[ H_E(n) :=
\begin{cases} (\mc{O}/n\mc{O})^*
& \text{ if $E$ has CM by an order $\mc{O}$ in $K$} \\
GL_2(\mbz/n\mbz) & \text{ if $E$ has no CM.} 
\end{cases}
\]
We note that $G_E(n)$ can be viewed as a subgroup of $H_E(n)$. With this in mind, for any subgroup $G$ of $H_E(n)$ and any integer $r$, we write
\[
G_r := \{ g \in G \, : \, \tr g \equiv r \mod n \}.
\]
Following Lang and Trotter \cite{langtrotter}, we now define the constant $C_{E,r}$ by 
\begin{equation} \label{cdef}
C_{E,r} :=  \phi_E(0) \cdot \frac{m_E |G_E(m_E)_r|}{|G_E(m_E)|} \cdot \prod_{{\begin{substack} {\ell \text{ prime} \\ \ell \nmid m_E} \end{substack}}} \frac{\ell |H_E(\ell)_r|}{|H_E(\ell)|},
\end{equation} 
where the positive integer $m_E$ is given by the following theorem, the celebrated non-CM case of which is due to Serre \cite{serre}.

\begin{theorem} \label{serrestheorem}
Suppose that $E$ is an elliptic curve over $\mbq$.  Then there exists a positive integer $m_E$ so that, for any positive integer $n$, we have
\[
G_E(n) \simeq \pi^{-1}(G_E(\gcd(n,m_E))),
\]
where $\pi : H_E(n) \longrightarrow H_E(\gcd(n,m_E))$ denotes the canonical projection. In particular, if $\ell$ is a prime not dividing $m_E$, then $G_E(\ell)\simeq H_E(\ell)$.
\end{theorem}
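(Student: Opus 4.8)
The plan is to obtain Theorem \ref{serrestheorem} as a formal consequence of the statement that the image of the full (adelic in the non-CM case, idelic in the CM case) Galois representation attached to $E$ is \emph{open} in the relevant profinite group, together with an elementary Chinese-Remainder-type descent from a single level $m_E$ to all levels $n$; thus there are really two inputs, a substantial openness theorem and a routine bookkeeping argument. In the non-CM case I would invoke Serre's open image theorem \cite{serre}: the image $\rho_E(\gal(\ol{\mbq}/\mbq))$ of the adelic representation $\rho_E : \gal(\ol{\mbq}/\mbq) \to GL_2(\hat{\mbz})$ is open, i.e. it contains $\ker\bigl(GL_2(\hat{\mbz}) \to GL_2(\mbz/m\mbz)\bigr)$ for some positive integer $m$; let $m_E$ be the least such $m$. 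Reducing modulo $n$ (the maps $GL_2(\hat{\mbz}) \to H_E(n) = GL_2(\mbz/n\mbz)$), one sees by construction that $\rho_E(\gal(\ol{\mbq}/\mbq))$ is exactly the preimage of $G_E(m_E)$ under $GL_2(\hat{\mbz}) \to H_E(m_E)$, which is the $n\to\infty$ form of the assertion.

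In the CM case, write $\mc{O}$ for the order by which $E$ has complex multiplication and $K$ for its field of fractions. The action of $\gal(\ol{\mbq}/K)$ on torsion is $\mc{O}$-linear, hence is given by a continuous homomorphism $\gal(\ol{\mbq}/K) \to (\mc{O}\otimes\hat{\mbz})^*$ whose reductions are the maps to $H_E(n) = (\mc{O}/n\mc{O})^*$. By the main theorem of complex multiplication this homomorphism is, up to finite data, a Hecke character of $K$; equivalently, class field theory shows that each torsion field $K(E[n])$ lies in a ray class field of $K$ of conductor dividing a fixed multiple of $n$, and this forces the image to be open in $(\mc{O}\otimes\hat{\mbz})^*$ (as in Serre's monograph on abelian $\ell$-adic representations, or the classical work of Deuring \cite{deur}). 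Exactly as above, one takes $m_E$ to be the level of this open subgroup, so that $G_E = \varprojlim_n G_E(n)$ is the full preimage of $G_E(m_E)$.

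It then remains to pass from the level-$m_E$ statement to an arbitrary level $n$. Set $d := \gcd(n,m_E)$ and $N := \lcm(n,m_E)$. Since $H_E(\cdot)$ factors as a product over prime powers, the Chinese Remainder Theorem identifies $H_E(N)$ with the fibre product $H_E(n) \times_{H_E(d)} H_E(m_E)$, and under this identification $G_E(N)$ — the preimage of $G_E(m_E)$ in $H_E(N)$ — corresponds to $\{(a,b) : a \equiv b \bmod d,\ b \in G_E(m_E)\}$. Projecting onto the first coordinate, the image of $G_E(N)$ in $H_E(n)$ is $G_E(n)$ on the one hand, and on the other hand it equals $\{a \in H_E(n) : a \bmod d \in G_E(d)\} = \pi^{-1}(G_E(d))$, using that $G_E(d)$ is the image of $G_E(m_E)$ in $H_E(d)$. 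This yields $G_E(n) = \pi^{-1}(G_E(\gcd(n,m_E)))$; the final sentence of the theorem is the special case $n = \ell \nmid m_E$, where $d = 1$ and $\pi^{-1}(G_E(1)) = H_E(\ell)$.

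The genuine obstacle is contained entirely in the openness input. In the non-CM case this is Serre's theorem, whose proof is far from formal: it rests on the perfectness of $SL_2(\mbf_\ell)$ for $\ell \ge 5$, a Goursat-type argument showing that the mod-$\ell$ images are ``independent'' across primes with the cyclotomic character (via the Weil pairing, the determinant) as the only coupling, surjectivity of the mod-$\ell$ representation for all but finitely many $\ell$, and an \emph{ad hoc} treatment of the remaining small primes. In the CM case the openness is considerably softer, following from class field theory, but it still requires the main theorem of complex multiplication to pin down the representation. Everything after the openness statement — the choice of $m_E$ and the descent to general $n$ — is straightforward.
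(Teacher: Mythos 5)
The paper itself contains no proof of Theorem \ref{serrestheorem}: it is quoted as a known result, with the non-CM case attributed to Serre \cite{serre} and the CM case being classical, so there is no internal argument to compare yours against. Your reduction is the standard route and is correct: openness of the adelic (resp.\ idelic) image supplies a level $m_E$ such that the full image $\varprojlim_n G_E(n)$ is the preimage of $G_E(m_E)$ in $GL_2(\hat{\mbz})$ (resp.\ in $(\mc{O}\otimes\hat{\mbz})^*$), and your fibre-product/Chinese-Remainder descent from level $\lcm(n,m_E)$ to level $n$ is a complete and correct way to obtain the statement for every $n$ --- indeed it gives equality of subgroups of $H_E(n)$, not merely an abstract isomorphism --- with the case $\ell\nmid m_E$ falling out as you say. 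Your identification of where the genuine content lies (Serre's open image theorem in the non-CM case; the main theorem of complex multiplication plus class field theory in the CM case) matches the paper's citations. The one point you gloss over --- as does the paper --- is that when $\mc{O}$ is a non-maximal order, viewing $\gal(K(E[n])/K)$ inside $(\mc{O}/n\mc{O})^*$ requires $E[n]$ to be free of rank one over $\mc{O}/n\mc{O}$, which can fail at primes dividing the conductor of $\mc{O}$; this is harmless for the paper's use, since $m_E$ may be taken divisible by those primes (consistent with \eqref{meassum}) and only $\ell\nmid m_E$ is ever used, but a fully literal proof for all $n$ should either restrict to $n$ prime to the conductor or say how $G_E(n)$ is compared with $H_E(n)$ at those primes.
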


Note that the conclusion of Theorem \ref{serrestheorem} and \eqref{cdef} continue to hold if one replaces the integer $m_E$ by any multiple.  For notational convenience, we will assume in the CM case that
\begin{equation} \label{meassum}
\left( 4 \cdot \prod_{\ell \text{ ramified in } \mc{O}} \ell \right) \quad \text{ divides } \;  m_E.
\end{equation}
Under this assumption, we further have the following explicit description of the cardinalities of $H_E(\ell)$ and $H_E(\ell)_r$ if $\ell \nmid m_E$.

\begin{lemma} \label{Hcard} Let $r$ be any integer and $\ell$ be a prime not dividing $m_E$ (in particular, $\ell$ does not ramify in $\mc{O}$ if $E$ has CM by $\mc{O}$). If $E$ does not have CM, then
\begin{equation} \label{H1} 
|H_E(\ell)| = l(l-1)^2(l+1)
\end{equation}  
and
\begin{equation} \label{H2} 
|H_E(\ell)_r | =
\begin{cases}
	\ell^2(\ell - 1) & \text{ if } r \equiv 0 \mod \ell \\
	\ell(\ell^2 - \ell - 1) & \text{ otherwise.}
\end{cases}
\end{equation} 
If $E$ has CM by an order $\mc{O}$ in an imaginary quadratic field $K$, then
\begin{equation} \label{H3}
|H_E(\ell)| = (\ell - 1)(\ell - \chi_\mc{O}(\ell))
\end{equation}  
and 
\begin{equation} \label{H4} 
| H_E(\ell)_r | = 
\begin{cases}
	\ell - 1 & \text{ if $r \equiv 0 \mod \ell$} \\
	\ell - (1 + \chi_\mc{O}(\ell)) & \text{ otherwise,}
\end{cases}
\end{equation}
where $\chi_\mc{O}(\ell)$ is the character determining the splitting of $\ell$ in the order $\mc{O}$, namely
\[
\chi_\mc{O}(\ell) :=
	\begin{cases}
		1 & \text{ if $\ell$ splits in $\mc{O}$} \\
		-1 & \text{ if $\ell$ is inert in $\mc{O}$.} 
	\end{cases}
\]
\end{lemma}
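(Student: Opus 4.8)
The plan is to treat the non-CM and CM cases separately; in each case one first identifies the group $H_E(\ell)$ together with the meaning of ``$\tr$'', and then carries out an elementary count of elements of prescribed trace, organised according to whether or not $r\equiv 0\bmod\ell$.

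\emph{Non-CM case.} Here $H_E(\ell)=GL_2(\mbz/\ell\mbz)$ and $\tr$ is the ordinary matrix trace, so \eqref{H1} is the classical count of ordered bases of $(\mbz/\ell\mbz)^2$ (the first vector is any of the $\ell^2-1$ nonzero vectors, the second any of the $\ell^2-\ell$ vectors outside its line). For \eqref{H2} I would write a matrix of trace $r$ as $\left(\begin{smallmatrix} a & b \\ c & r-a\end{smallmatrix}\right)$, of which there are $\ell^3$, and subtract off the singular ones, namely those with $a(r-a)=bc$. For fixed $a$ the number of pairs $(b,c)$ with $bc=a(r-a)$ is $\ell-1$ when $a(r-a)\neq 0$ and $2\ell-1$ when $a(r-a)=0$; since $a(r-a)=0$ has two solutions in $a$ when $r\not\equiv 0\bmod\ell$ and only one when $r\equiv 0\bmod\ell$, this produces $\ell^2+\ell$ singular matrices in the first case and $\ell^2$ in the second, whence $|H_E(\ell)_r|$ equals $\ell(\ell^2-\ell-1)$ or $\ell^2(\ell-1)$ respectively. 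A useful consistency check to record is $\sum_{r\bmod\ell}|H_E(\ell)_r|=|GL_2(\mbz/\ell\mbz)|$.

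\emph{CM case.} The preliminary point is that $\ell\nmid m_E$ forces, via \eqref{meassum}, the prime $\ell$ to be odd, unramified in $K$, and coprime to the conductor of $\mc{O}$; hence $\mc{O}$ is maximal at $\ell$ and
\[
\mc{O}/\ell\mc{O}\;\simeq\;\begin{cases}\mbf_\ell\times\mbf_\ell & \text{if }\chi_\mc{O}(\ell)=1,\\ \mbf_{\ell^2} & \text{if }\chi_\mc{O}(\ell)=-1,\end{cases}
\]
with complex conjugation on $\mc{O}$ inducing, respectively, the interchange of the two factors and the $\ell$-power Frobenius of $\mbf_{\ell^2}$. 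I would then invoke the standard isomorphism $E[\ell]\simeq\mc{O}/\ell\mc{O}$ of $\mc{O}$-modules and check, by writing out the $2\times 2$ matrix of multiplication-by-$\alpha$ in a $\mbz$-basis of $\mc{O}$, that the trace of $\alpha\in\mc{O}$ on $E[\ell]$ equals the reduced trace $\alpha+\ol{\alpha}=\mathrm{Tr}_{K/\mbq}(\alpha)\bmod\ell$. This identifies $H_E(\ell)$ with $(\mc{O}/\ell\mc{O})^*$ and $H_E(\ell)_r$ with the units of reduced trace $r$. Then \eqref{H3} is immediate from $|(\mbf_\ell\times\mbf_\ell)^*|=(\ell-1)^2$ and $|\mbf_{\ell^2}^*|=\ell^2-1$, both equal to $(\ell-1)(\ell-\chi_\mc{O}(\ell))$. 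For \eqref{H4}: in the split case a unit $(a_1,a_2)$ has reduced trace $a_1+a_2$, and among the $\ell$ pairs with $a_1+a_2=r$ one removes those with a vanishing coordinate --- two of them if $r\not\equiv 0\bmod\ell$, one if $r\equiv 0\bmod\ell$, giving $\ell-(1+\chi_\mc{O}(\ell))$ and $\ell-1$; in the inert case the $\mbf_{\ell^2}/\mbf_\ell$-trace is a surjective $\mbf_\ell$-linear form, so exactly $\ell$ elements of $\mbf_{\ell^2}$ have reduced trace $r$, the only non-unit among them being $0$ (which occurs iff $r\equiv 0\bmod\ell$), giving $\ell-1$ when $r\equiv 0\bmod\ell$ and $\ell=\ell-(1+\chi_\mc{O}(\ell))$ otherwise.

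I do not anticipate a real obstacle, since everything reduces to elementary counting. The only substantive inputs are external: the structure of $\mc{O}/\ell\mc{O}$ and the $\mc{O}$-module isomorphism $E[\ell]\simeq\mc{O}/\ell\mc{O}$ for $\ell$ prime to the conductor (CM theory), together with the identification of the Frobenius-trace with the reduced trace of $\mc{O}$. The only thing demanding care is keeping the case distinction $r\equiv 0$ versus $r\not\equiv 0\bmod\ell$ straight in all four counts, exactly as in the statement.
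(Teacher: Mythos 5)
Your proposal is correct and follows the same route as the paper: the paper leaves the $GL_2(\mbz/\ell\mbz)$ counts \eqref{H1}--\eqref{H2} to the reader and derives \eqref{H3}--\eqref{H4} from the same structural fact $\mc{O}/\ell\mc{O}\simeq\mbf_\ell\oplus\mbf_\ell$ or $\mbf_{\ell^2}$ that you use, the counting being elementary in both cases. You have simply written out the details (the trace-$r$ matrix count and the split/inert fiber counts of the reduced trace) that the paper omits.
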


\begin{proof} We leave the proofs of \eqref{H1} and \eqref{H2} to the reader and deal only with the CM case. We note that since $E$ is defined over $\mbq$, the class number $h(\mc{O})$ of the order $\mc{O}$ equals 1
(see \cite[p. $99$, Proposition$1.2$ (b)]{silverman2}, which works out the case where $\mc{O}$ is the full ring of integers).  Now for any prime $\ell$ which is not ramified in $\mc{O}$, we have
\begin{equation} \label{OmodellO}
\mc{O} / \ell \mc{O} \simeq
\begin{cases}
	\mbf_\ell \oplus \mbf_\ell & \text{ if $\ell$ splits in $\mc{O}$} \\
	\mbf_{\ell^2} & \text{ if $\ell$ is inert in $\mc{O}$.}
\end{cases}
\end{equation}
This implies \eqref{H3} and \eqref{H4}.
\end{proof}

Thus, if $E$ has CM, then 
\begin{equation*} \label{explicitconstantCMcase}
C_{E,r}= \frac{1}{2\pi} \cdot \frac{m_E |G_E(m_E)_r|}{|G_E(m_E)|}\cdot \prod_{{\begin{substack} {\ell \nmid m_E \\ \ell \mid r} \end{substack}}} \left(1+\frac{\chi_\mc{O}(\ell)}{\ell - \chi_\mc{O}(\ell)}\right) \prod_{{\begin{substack} {\ell \nmid m_E \\ \ell \nmid r} \end{substack}}} \left(1-\frac{\chi_\mc{O}(\ell)}{\left( \ell - 1 \right) \left( \ell - \chi_\mc{O}(\ell) \right)}\right).
\end{equation*}
Noting that, for fixed $E$, the factor $\frac{m_E |G_E(m_E)_r|}{|G_E(m_E)|}$ takes on only finitely many values as $r$ varies, we obtain the bounds \eqref{CMcasebounds}. If $E$ does not have CM, then we have 
\begin{equation*} \label{explicitconstant}
C_{E,r} = \frac{2}{\pi} \cdot \frac{m_E |G_E(m_E)_r|}{|G_E(m_E)|}\cdot \prod_{{\begin{substack} {\ell \nmid m_E\\ \ell \mid r} \end{substack}}} \left( 1 + \frac{1}{\ell^2-1} \right) \cdot \prod_{{\begin{substack} {\ell \nmid m_E\\ \ell \mid r} \end{substack}}} \left( 1 -  \frac{1}{(\ell-1)(\ell^2-1)} \right).
\end{equation*}
Notice that the Euler product converges absolutely and \eqref{nonCMcasebounds} follows.

\section{Averaging the constants over residue classes} \label{constantaveraging}
Proposition \ref{asymprop} will serve as a key tool in the following sections. We now provide a proof of this proposition which has some similarity to the proof of Lemma 9 in \cite{baierzhao}, where certain constants related to $C_{E,r}$ were averaged as well. However, the algebraic structure of the relevant constants in \cite{baierzhao} is much simpler than that of the original Lang-Trotter constants $C_{E,r}$ which we deal with in the present paper.

Our goal is to obtain an asymptotic formula for the average
\[ \sum_{{\begin{substack} {r \equiv a \mod q \\ A < r \leq A+B \\ r \neq 0} \end{substack}}} C_{E,r}. \]
As observed in the previous section, \eqref{cdef} continues to hold if the integer $m_E$ appearing on the right-hand side of this equation is replaced by any multiple. Therefore, we may assume that $m_E$ is divisible by $q$ throughout the following. Moreover,  Theorem \ref{serrestheorem} and Lemma \ref{Hcard} tell us that if $\ell$ is a prime not dividing $m_E$, then 
\[|G_E(\ell)_r|=\begin{cases} |H_E(\ell)_1| & \mbox{ if } \ell \nmid r \\
|H_E(\ell)_0| & \mbox{ if } \ell | r. \end{cases} \] 
Hence, we can write the constant in the form
\begin{equation} \label{usefulcer}
C_{E,r} = \phi_E(0) \cdot \frac{m_E |G_E(m_E)_r|}{|G_E(m_E)|} \cdot C \cdot f(r),
\end{equation}
where
\[
C := \prod_{{\begin{substack} { \ell \nmid m_E } \end{substack}}} \frac{\ell |H_E(\ell)_1|}{|H_E(\ell)|} \quad \text{ and } \quad f(r) := \prod_{{\begin{substack} { \ell \nmid m_E \\ \ell \mid r } \end{substack}}} \frac{|H_E(\ell)_0|}{|H_E(\ell)_1|}.
\]
Thus we have
\begin{eqnarray}
\sum_{{\begin{substack} {r \equiv a \mod q \\ A < r \leq A+B \\ r \neq 0} \end{substack}}} C_{E,r}  &=& \phi_E(0) \cdot m_E \cdot
C \cdot \sum_{{\begin{substack} {r \equiv a \mod q \\ A < r \leq A+B \\ r \neq 0} \end{substack}}} f(r) \cdot \frac{|G_E(m_E)_r|}{|G_E(m_E)|}
\nonumber\\ \label{finalsum1}
&=& \phi_E(0) \cdot m_E \cdot
C \cdot \sum_{{\begin{substack} {b \mod m_E \\ b \equiv a \mod q} \end{substack}}} \frac{|G_E(m_E)_b|}{|G_E(m_E)|} \sum_{{\begin{substack} {r \equiv b \mod m_E \\ A < r \leq A+B \\ r \neq 0} \end{substack}}} f(r).
\end{eqnarray}
We use the Dirichlet convolution
$g = f * \mu$ to re-write the inner sum as
\begin{equation} \label{mobiussum}
\sum_{{\begin{substack} {r \equiv b \mod m_E \\ A < r \leq A+B \\ r \neq 0} \end{substack}}} f(r) =  \sum_{{\begin{substack} {r \equiv b \mod m_E \\ A < r \leq A+B \\ r \neq 0} \end{substack}}}
\sum_{d \mid r} g(d)
= \sum_{d = 1}^\infty g(d) \sum_{{\begin{substack} {A < r \leq A+B\\
r \equiv b \mod m_E \\ r \equiv 0 \mod d \\ r \neq 0} \end{substack}}} 1.
\end{equation}
It is straightforward to show that
\begin{equation} \label{g}
g(d) =
\begin{cases}
\mu^2(d) \cdot \prod_{\ell \mid d} \frac{|H_E(\ell)_0| - |H_E(\ell)_1|}{ |H_E(\ell)_1|} & \text{ if } \gcd(d,m_E) = 1 \\
0 & \text{ if } \gcd(d,m_E) > 1.
\end{cases}
\end{equation}
Using Lemma \ref{Hcard}, we see that
\[
\sum_{d = 1}^\infty g(d) =
\begin{cases}
	\sum_{\gcd(d,m_E) = 1} \frac{\mu^2(d) \cdot \chi_\mc{O}(d)}{\prod_{\ell \mid d} \left( \ell - 1 - \chi_\mc{O}(\ell) \right)} & \text{ if $E$ has CM by $\mc{O}$} \\
	\sum_{\gcd(d,m_E) = 1} \frac{\mu^2(d)}{\prod_{\ell \mid d} (\ell^2 - \ell - 1)} & \text{ if $E$ has no CM.}
\end{cases}
\]
Note in particular that the sum $\sum_{d = 1}^\infty g(d)$ is convergent, albeit only conditionally in the CM case.  Using \eqref{mobiussum}, \eqref{g} and the Chinese Remainder Theorem, we deduce in the non-CM case that
\[
\sum_{{\begin{substack} {r \equiv b \mod m_E \\ A < r \leq A+B \\ r \neq 0} \end{substack}}} f(r) =
\sum_{d = 1}^\infty g(d) \left(\frac{B}{dm_E} + O(1) \right) = \frac{B}{m_E} \cdot \sum_{d=1}^\infty \frac{g(d)}{d} + O(1).
\]
In the CM case, the error term is more delicate, and so (recalling that $M := \max\{ |A|, |A+B| \}$), we write
\begin{eqnarray*}
\sum_{{\begin{substack} {r \equiv b \mod m_E \\ A < r \leq A+B \\ r \neq 0} \end{substack}}} f(r) &=&
\sum_{d \leq M} g(d) \left(\frac{B}{dm_E} + O(1) \right) \\
&=& \frac{B}{m_E} \cdot \sum_{d = 1}^\infty \frac{g(d)}{d} + O\left( \frac{B}{m_E} \cdot \sum_{d > M} \frac{|g(d)|}{d} + \sum_{d \leq M} |g(d)| \right) \\
&=&  \frac{B}{m_E} \cdot \sum_{d = 1}^\infty \frac{g(d)}{d} + O\left( (\log M)^{3} \right),
\end{eqnarray*}
where we have used the fact that, for $\gcd(d,m_E) = 1$ (note that then $d$ must be odd by \eqref{meassum}), one has
\[
|g(d)| \leq \frac{1}{d} \cdot \prod_{\ell \mid d} \left( 1 + \frac{2}{\ell - 2} \right) \ll \frac{1}{d} \cdot \prod_{\ell \mid d} \left( 1 + \frac{2}{\ell} \right) \ll \frac{(\log d)^{2}}{d} \; \text{ and } \; B \ll M.
\]
Also, by \eqref{g}, and noting that the sum $\sum_{d = 1}^\infty \frac{g(d)}{d}$ converges absolutely in either case, we have
\[
\sum_{d = 1}^\infty \frac{g(d)}{d} = \prod_{{\begin{substack} {\ell \nmid m_E } \end{substack}}} \left( 1 + \frac{g(\ell)}{\ell} \right) = \prod_{\ell \nmid m_E} \frac{|H_E(\ell)_0| + (\ell-1)| H_E(\ell)_1|}{\ell |H_E(\ell)_1|} = C^{-1}.
\]
Inserting this into \eqref{finalsum1} and using the fact that
\[
\sum_{{\begin{substack} {b \mod m_E \\ b \equiv a \mod q} \end{substack}}} \frac{ |G_E(m_E)_b|}{|G_E(m_E)|} \; = \;  \frac{|G_E(q)_a|}{|G_E(q)|},
\]
we conclude that
\[
\sum_{{\begin{substack} {r \equiv a \mod q \\ A < r \leq A+B} \end{substack}}} C_{E,r} \; = \; \phi_E(0) \cdot B \cdot \frac{|G_E(q)_a|}{|G_E(q)|} \; + \;
\begin{cases}
	O_{E} \left( q \cdot \log^{3} M \right) & \text{ if $E$ has CM} \\
	O_E(q) & \text{ if $E$ has no CM.}
\end{cases}
\]
Proposition \ref{asymprop} now follows at once from the next
\begin{lemma} \label{CMcaselemma}
We have
\begin{equation*} \label{relationwithchebfactor}
\frac{|G_E(q)_a|}{|G_E(q)|} \; = \;
\gl_E \left( \gd_{a,q} - \frac{\gamma(E,a,q)}{2} \right),
\end{equation*}
where
\[
\gl_E :=
\begin{cases}
	2 & \text{ if $E$ has CM} \\
	1 & \text{ if $E$ has no CM.}
\end{cases}
\]
\end{lemma}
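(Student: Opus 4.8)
The plan is to reduce both cases to the definition of $\gd_{a,q}$, with the real work concentrated in the CM case. If $E$ has no CM then $G_E(q) = \gal(\mbq(E[q])/\mbq)$ acts faithfully on $E[q]$ and is identified with a subgroup of $GL_2(\mbz/q\mbz)$, so $|G_E(q)_a|/|G_E(q)|$ is literally $\gd_{a,q}$; since moreover $\gl_E = 1$ and $\gamma(E,a,q) = 0$ here, there is nothing to prove. So assume $E$ has CM by the order $\mc{O}$ in the imaginary quadratic field $K$, so that $\gl_E = 2$ and $\gamma(E,a,q)$ equals $1$ when $q \mid a$ and $0$ otherwise.

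I would work with $H := \gal(K(E[q])/\mbq)$, which contains $N := G_E(q) = \gal(K(E[q])/K)$ as a subgroup of index $[K:\mbq] = 2$. The group $H$ acts on $E[q]$ through the restriction map $H \twoheadrightarrow \gal(\mbq(E[q])/\mbq) \hookrightarrow GL_2(\mbz/q\mbz)$, so the trace $\tr\gs$ of $\gs \in H$ is a well-defined element of $\mbz/q\mbz$ that is constant on the fibres of this surjection; counting fibres gives, uniformly in $q$,
\[
\frac{|H_a|}{|H|} \;=\; \gd_{a,q}, \qquad H_a := \{\gs \in H : \tr\gs \equiv a \bmod q\}.
\]
The lemma then reduces to the single claim that every element of the non-trivial coset $H \setminus N$ has trace $\equiv 0 \bmod q$. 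Indeed, granting this, $H \setminus N$ contributes all $|N|$ of its elements to $H_a$ when $q \mid a$ and none otherwise, so $|H_a| = |N_a| + \gamma(E,a,q)\,|N|$; dividing by $|N|$ and using $|H| = 2|N|$ yields $|N_a|/|N| = 2\gd_{a,q} - \gamma(E,a,q) = \gl_E\bigl(\gd_{a,q} - \gamma(E,a,q)/2\bigr)$, as desired.

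To prove the claim I would use that the CM order $\mc{O}$ has class number $1$ (recalled in the proof of Lemma \ref{Hcard} above), so that the complex uniformization $E(\mbc) \cong \mbc/\mc{O}$ furnishes an isomorphism of $\mc{O}$-modules $E[q] \cong \mc{O}/q\mc{O}$; fix a generator $P_0$, so that every point of $E[q]$ is uniquely $[\alpha]P_0$ with $\alpha \in \mc{O}/q\mc{O}$. All endomorphisms of $E$ are defined over $K$, so an element $\gs \in H \setminus N$ -- equivalently, one inducing complex conjugation on $K$ -- satisfies $\gs([\alpha]P_0) = [\overline{\alpha}]\gs(P_0)$. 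Writing $\gs(P_0) = [\nu]P_0$ with $\nu \in (\mc{O}/q\mc{O})^*$ a unit (since $\gs$ permutes $E[q]$), we see that $\gs$ acts on $E[q] \cong \mc{O}/q\mc{O}$ by $\alpha \mapsto \nu\overline{\alpha}$, hence $\gs^2$ acts by $\alpha \mapsto \nu\overline{\nu\overline{\alpha}} = N(\nu)\alpha$, i.e.\ $\gs^2$ is the scalar $N(\nu)$, while a one-line computation in a $\mbz/q\mbz$-basis $\{1,\omega_0\}$ of $\mc{O}/q\mc{O}$ (with $\mc{O} = \mbz[\omega_0]$) gives $\det(\gs|_{E[q]}) = -N(\nu)$. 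Plugging $\gs^2 = N(\nu)$ and $\det\gs = -N(\nu)$ into the Cayley--Hamilton relation $\gs^2 - (\tr\gs)\gs + (\det\gs) = 0$ yields $(\tr\gs)\gs = 0$, and since $\gs|_{E[q]}$ is invertible we conclude $\tr\gs \equiv 0 \bmod q$.

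The main obstacle, and the only place anything substantive happens, is this trace-vanishing claim for the conjugation-inducing elements; within it the key input is the $\mc{O}$-module isomorphism $E[q] \cong \mc{O}/q\mc{O}$, i.e.\ the fact that $E[q]$ is free of rank one rather than merely a faithful $\mc{O}/q\mc{O}$-module of the right cardinality -- this is exactly where $h(\mc{O}) = 1$ enters. I would also double-check that the reduction $|H_a|/|H| = \gd_{a,q}$ survives the degenerate possibility that $K \not\subseteq \mbq(E[q])$ (which can occur only for $q \mid 2$), where the restriction $H \to \gal(\mbq(E[q])/\mbq)$ has a kernel of order $2$: since the trace on $H$ is pulled back along this map the ratio is unaffected, and one checks directly that the trace-vanishing claim still holds because $\tr(\mathrm{id}_{E[q]}) = 2 \equiv 0 \bmod q$ in that case.
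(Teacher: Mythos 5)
Your proof is correct, and while it rests on the same key identity as the paper's --- namely that $G_E(q)$ sits with index $2$ inside a Galois group whose complementary coset consists entirely of trace-zero elements --- your packaging differs in two substantive ways. First, you actually prove the trace-vanishing claim (via $E[q]\simeq\mc{O}/q\mc{O}$, the semilinear rule $\gs([\alpha]P_0)=[\ol{\alpha}]\gs(P_0)$, and Cayley--Hamilton applied to $\gs^2=N(\nu)$, $\det\gs=-N(\nu)$), whereas the paper simply asserts that every matrix in $\gal(\mbq(E[q])/\mbq)\setminus\gal(\mbq(E[q])/K)$ has trace zero. Second, by working throughout in $H=\gal(K(E[q])/\mbq)$ and observing that the trace is constant on the fibres of $H\twoheadrightarrow\gal(\mbq(E[q])/\mbq)$, so that $|H_a|/|H|=\gd_{a,q}$ whether or not $K\subseteq\mbq(E[q])$, you dispense with the paper's separate treatment of the degenerate case $K\not\subseteq\mbq(E[q])$ (possible only for $q=1,2$), which in the paper occupies most of the proof: there one embeds $\gal(\mbq(E[2])/\mbq)$ into $(\mc{O}/2\mc{O})^*$ and rules out the cyclic-of-order-$3$ possibility by showing $\gD_E$ would have to be a perfect square and checking the explicit list of CM $j$-invariants. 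Your route buys uniformity in $q$ and a self-contained proof of the coset fact; the paper's buys an explicit description of $\gal(\mbq(E[2])/\mbq)$ for CM curves, which has some independent interest. One small point to tighten: ``all endomorphisms are defined over $K$'' by itself only gives $\mc{O}$-equivariance for elements of $N$; for $\gs\in H\setminus N$ you also need the standard fact that, since $E^\gs=E$, conjugation by $\gs$ acts on the normalized CM action through $\gal(K/\mbq)$, i.e.\ $[\alpha]^\gs=[\ol{\alpha}]$ --- this deserves a sentence and a reference (e.g.\ Silverman, \emph{Advanced Topics}, II.2.2), but it is not a gap.
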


\noindent \emph{Proof of Lemma \ref{CMcaselemma}.}
In case $E$ has no CM, the result is immediate.  We turn to the CM case.  Suppose first that $K \subseteq \mbq(E[q])$.  Noting the disjoint union
\[
\gal(\mbq(E[q])/\mbq) = \gal(\mbq(E[q])/K) \sqcup \left( \gal(\mbq(E[q])/\mbq) - \gal(\mbq(E[q])/K) \right),
\]
and that every matrix in $\left( \gal(\mbq(E[q])/\mbq) - \gal(\mbq(E[q])/K) \right)$ has trace zero, Lemma \ref{CMcaselemma} follows in this case.  If $K$ is not contained in $\mbq(E[q])$, then we must have either $q=1$ or $q=2$ (see \cite[Lemma 6]{murty}, for example).  The case $q = 1$ is trivial, and if $q=2$, we see that
\[
\gal(\mbq(E[2])/\mbq) \simeq \gal(K(E[2])/K) \hookrightarrow (\mc{O}/2\mc{O})^*.
\]
By \eqref{OmodellO}, it follows that $\gal(\mbq(E[2])/\mbq)$ is cyclic of order $1$, $2$, or $3$.  We will now argue that the ``cyclic of order $3$'' case never occurs.  To see this, first note that if $\gal(\mbq(E[2])/\mbq)$ is cyclic of order $3$, then the discriminant of $E$ is a perfect square.  Indeed, one may identify $\text{Aut}(E[2])$ with $S_3$, the symmetric group on $3$ letters, by considering its action on the non-identity $2$-torsion points
\[
E[2] - \{ (\infty,\infty) \} = \{ (e_1,0), (e_2,0), (e_3,0) \},
\]
where $E$ is given by the Weierstrass model $y^2 = (x-e_1)(x-e_2)(x-e_3)$.  If $\gal(\mbq(E[2])/\mbq)$ is cyclic of order three, then under this association it must correspond to the alternating group $A_3$.  But then by Galois theory,
\[
\sqrt{\gD_E} = (e_1-e_2)(e_1-e_3)(e_2-e_3) \in \mbq,
\]
and so $\gD_E$ is a perfect square.  Now consider the explicit Weierstrass equations
\[
y^2 = x^3+ax, \quad y^2 = x^3 + b, \quad y^2 = x^3 -3j(j-1728)^3 x + 2j(j-1728)^5
\]
with $j$-invariants $1728$, $0$, and
\[
\begin{split}
j \in \{ &54000, -12288000, 287496, -3375, 16581375, 8000, -32768, -884736, \\
&-884736000, -147197952000, -262537412640768000 \},
\end{split}
\]
respectively.  Any CM elliptic curve over $\mbq$ is $\ol{\mbq}$-isomorphic to one of these models, and except for the curves with $j$-invariant $1728$, the square-free part of the discriminant $\gD = -16(4a^3+27b^2)$ is independent of the model chosen.  One computes the discriminants to be
\[
-2^8 a^3, \quad -2^43^3b^2, \quad 2^{12}3^6 j^2 (j-1728)^9.
\]
The only time any of these is a perfect square is for the curve $y^2 = x^3+ax$, when $a = -t^2$, in which case $E[2]$ is rational.  Thus, $\gal(\mbq(E[2])/\mbq)$ is never cyclic of order $3$, and so must be cyclic of order $1$ or $2$, representable by matrices as
\[
\gal(\mbq(E[2])/\mbq) \quad \simeq \quad \left\{ \begin{pmatrix} 1 & 0 \\ 0 & 1 \end{pmatrix} \right\} \; \text{ or } \; \left\{ \begin{pmatrix} 1 & 1 \\ 0 & 1 \end{pmatrix}, \begin{pmatrix} 1 & 0 \\ 0 & 1 \end{pmatrix} \right\}.
\]
In either case, we have
\[
\frac{| G_E(2)_0 |}{| G_E(2) |} = \gd_{0,2} = 1 \quad \text{ and } \quad \frac{| G_E(2)_1 |}{| G_E(2) |} = \gd_{1,2} = 0,
\]
upon which Lemma \ref{CMcaselemma} follows in this case.
\hfill $\Box$ \\

We have now completed the proof of Proposition \ref{asymprop}. \hfill $\Box$ \\

\section{Consistency with Chebotarev density} \label{consistencywithchebotarevdensity}
We will now verify the consistency of our refinement with the Chebotarev theorem for the $q$-th division field of $E$. More precisely, we establish the following.

\begin{theorem} \label{consistencythm}
Conjecture \ref{refinement} implies the asymptotic \eqref{chebotarev}.
\end{theorem}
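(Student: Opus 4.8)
The plan is to sum Conjecture~\ref{refinement} over the residue class $r\equiv a\bmod q$ and compare with the elementary identity
\[
\sum_{\substack{r\equiv a\bmod q\\|r|\le 2\sqrt{x}}}\pi_{E,r}(x)\;=\;\sum_{\substack{p\le x,\,p\nmid q\gD_E\\a_E(p)\equiv a\bmod q}}1\;+\;O_q(1)
\]
established in Section~\ref{introduction}. If $E$ has CM and $q\mid a$, I would first peel off the term $\pi_{E,0}(x)$, which by Deuring's estimate \eqref{deuob} equals $\tfrac12 Li(x)+O_{E,C}(x/(\log x)^C)=\tfrac{\gamma(E,a,q)}{2}Li(x)+O_{E,C}(x/(\log x)^C)$; in every other case $\gamma(E,a,q)=0$ and no term is removed. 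To each of the remaining $O(\sqrt x)$ terms Conjecture~\ref{refinement} applies, and I would invoke \eqref{refconj} with the exponent $C+1$ in place of $C$, so that the accumulated error is $O_{E,C}(\sqrt x\cdot\sqrt x/(\log x)^{C+1})=O_{E,C}(x/(\log x)^{C+1})$. It therefore suffices to prove
\[
\sum_{\substack{r\equiv a\bmod q\\4\le|r|\le 2\sqrt x}}F_{E,r}(x)\;=\;\Bigl(\gd_{a,q}-\tfrac{\gamma(E,a,q)}{2}\Bigr)Li(x)\;+\;O_{E,q,C}\!\left(\frac{x}{(\log x)^C}\right),
\]
the finitely many omitted terms with $|r|\le 3$ contributing only $O_E(\sqrt x/\log x)$ by \eqref{Fbound}, hence negligible.

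Next I would interchange the sum with the integral in \eqref{fdef} (all terms being non-negative, this is legitimate); since that integrand is supported on $t\ge r^2/4$, the left side becomes $\int_2^x S(t)\,(2\sqrt t\log t)^{-1}\,dt$, where $S(t):=\sum_{r\equiv a\bmod q,\ 4\le|r|\le 2\sqrt t}C_{E,r}\,\Phi_E(r/(2\sqrt t))$. The heart of the argument is the estimate
\[
S(t)\;=\;2\sqrt t\Bigl(\gd_{a,q}-\tfrac{\gamma(E,a,q)}{2}\Bigr)\;+\;O_{E,q,C}\!\left(\frac{\sqrt t}{(\log t)^C}\right)
\]
(with the understanding that in the CM case the error is controlled only after integrating in $t$, which is all that is used). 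Granting it, the main term contributes $(\gd_{a,q}-\gamma(E,a,q)/2)\int_2^x dt/\log t=(\gd_{a,q}-\gamma(E,a,q)/2)Li(x)$ exactly, the error contributes $O_{E,q,C}(x/(\log x)^C)$, and adding back the peeled-off Deuring term $\tfrac{\gamma(E,a,q)}{2}Li(x)$ recovers \eqref{chebotarev}.

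To establish the estimate for $S(t)$, set $Y:=2\sqrt t$ and let $\kappa:=\phi_E(0)\,|G_E(q)_a|/|G_E(q)|$, which by Lemma~\ref{CMcaselemma} equals $\phi_E(0)\gl_E(\gd_{a,q}-\gamma(E,a,q)/2)$. Proposition~\ref{asymprop} shows that $N(v):=\sum_{r\equiv a\bmod q,\ 4\le|r|\le v}C_{E,r}=2\kappa v+O_E(q(\log(2+v))^3)$, the factor $2$ coming from the two intervals $\pm[4,v]$. I would split $S(t)$ into a bulk part $|r|\le Y-R$ and a tail part $Y-R<|r|\le Y$, with $R:=Y/(\log Y)^{2(C+1)}$. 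On the bulk one has $\Phi_E(r/Y)\ll(Y/R)^{1/2}$ and $|\Phi_E'(r/Y)|\ll(Y/R)^{3/2}$ in the CM case (and $\Phi_E\le1$ with $\int_0^1|\Phi_E'|<\infty$ in the non-CM case, where one may even take $R=0$), so Abel summation against $N$ is legitimate; its boundary contribution cancels exactly the boundary term produced when $\int v\,\Phi_E'(v/Y)\,dv$ is integrated by parts, leaving the main term $2\kappa Y\int_0^1\Phi_E(w)\,dw=\kappa Y\int_{-1}^1\Phi_E(w)\,dw$. Because $\int_{-1}^1\phi_E(w)\,dw=1/\gl_E$ — this is where the difference between the CM and non-CM normalisations of $\phi_E$ enters — this equals $\phi_E(0)\gl_E(\gd_{a,q}-\tfrac{\gamma}{2})\cdot Y\cdot\frac1{\gl_E\phi_E(0)}=Y(\gd_{a,q}-\tfrac{\gamma(E,a,q)}{2})$, the claimed main term; and the errors — from replacing $N(v)$ by $2\kappa v$ (producing $\ll_E q(\log Y)^{C+4}$) and from truncating the integral of $\Phi_E$ near the endpoint (producing $\ll(RY)^{1/2}=Y/(\log Y)^{C+1}$) — are $\ll_{E,q,C}Y/(\log Y)^C$ for $Y$ large. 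The tail is trivial in the non-CM case ($\Phi_E\le1$, $C_{E,r}\ll_E1$, so it is $\ll_E R\ll Y/(\log Y)^C$), but in the CM case $\Phi_E$ has an \emph{integrable} singularity at $\pm1$ and $C_{E,r}\ll_E\log\log(3+|r|)$ by \eqref{CMcasebounds}, so I would not bound the tail pointwise; instead, writing $S_2(t)$ for the tail part of $S(t)$, I would estimate $\int_2^x S_2(t)\,(2\sqrt t\log t)^{-1}\,dt$ by interchanging sum and integral: for fixed $r$ the set of $t$ with $r$ in the tail is an interval just above $r^2/4$ of length $\ll r^2/(\log|r|)^{2(C+1)}$, over which $\int\Phi_E(r/(2\sqrt t))\,(2\sqrt t)^{-1}\,dt\ll|r|/(\log|r|)^{C+1}$ by the substitution $t=r^2/4+s$ together with $\int_0 s^{-1/2}\,ds<\infty$; summing over $4\le|r|\le2\sqrt x$ gives $\ll_E x\log\log x/(\log x)^{C+1}\ll_E x/(\log x)^C$.

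The main obstacle is the singularity of $\Phi_E$ at $\pm1$ in the CM case: it makes $S(t)$ genuinely infinite on the measure-zero set of $t$ with $2\sqrt t\in\mbz$ and obstructs a naive Abel summation over the full range, forcing the endpoint piece to be controlled only at the level of the $t$-integral. This, combined with the fact that the CM constants $C_{E,r}$ may grow like $\log\log|r|$, is exactly why Conjecture~\ref{refinement} must be invoked with the slightly larger exponent $C+1$, so that a spare power of $\log x$ is available to absorb $\log\log x$; the non-CM case needs none of these refinements and reduces to a routine partial summation.
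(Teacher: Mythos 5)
Your proposal is correct, and it rests on the same pillars as the paper's proof: sum \eqref{refconj} over the residue class, peel off the supersingular term via \eqref{deuob}, and reduce to the average of $F_{E,r}(x)$, which is evaluated by partial summation against Proposition \ref{asymprop} together with Lemma \ref{CMcaselemma} and the identity $\int_0^1\phi_E(z)\,dz=1/(2\gl_E)$. Where you genuinely differ is in the analytic treatment of the CM endpoint singularity: the paper keeps the $r$-sum outermost, does Abel summation in $r$ first, and regularizes the singular integrand by shifting the limits of integration by a parameter $\gd>0$ which is sent to $0^+$ at the end, whereas you interchange sum and integral at the outset (legitimate by positivity), estimate the inner sum $S(t)$ pointwise only on the bulk $|r|\le 2\sqrt t\bigl(1-(\log 2\sqrt t)^{-2(C+1)}\bigr)$, and control the edge contribution only after integrating in $t$, using that for fixed $r$ the singular range of $t$ is a short interval above $r^2/4$ on which $\Phi_E(r/(2\sqrt t))/(2\sqrt t)=\tfrac12\,(t-r^2/4)^{-1/2}$ is integrable, together with the bound \eqref{CMcasebounds} on $C_{E,r}$. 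Both devices work: yours trades the paper's clean limiting argument for explicit log-power truncations, giving the slightly lossier bound $O_{E,q,C}\bigl(x\log\log x/(\log x)^{C+1}\bigr)$ for the averaged main term --- amply sufficient, since Conjecture \ref{refinement} is assumed for every exponent --- while the paper's $\gd$-trick yields the sharper uniform estimate $O_E(q\sqrt x\log^3 x)$. The points you gloss over (eventual monotonicity in $t$ of the truncation boundary, and the bounded ranges of $t$ and $|r|$) are harmless and easily absorbed into the error terms.
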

\begin{proof}
Let $F_{E,r}(x)$ be defined as in \eqref{fdef}, {\it i.e.}, $F_{E,r}(x)$ is the main term in \eqref{refconj}. 
The statement of the theorem follows from \eqref{deuob} and the asymptotic estimate
\begin{equation*}\label{whatweneed}
\sum_{{\begin{substack} {r \equiv a \mod q \\ 0 < |r| \leq 2\sqrt{x}} \end{substack}}} F_{E,r}(x) \; = \; \left( \gd_{a,q} - \frac{\gamma(E,a,q)}{2} \right)\cdot Li(x) \; + \; O_{E}(q \sqrt{x} \log^{3} x),
\end{equation*}
which we shall prove in the following.  We remark that, in the (more straightforward) non-CM case, one may obtain the stronger error term $O_E(q \sqrt{x}/\log x)$.  The CM case is complicated a bit by the fact that $\phi_E$ has a singularity at the point $1$, which necessitates a truncation parameter $\gd>0$ which will eventually approach zero.  We will prove the CM case, noting that the non-CM case follows in much the same way, but without the parameter $\gd$.

We begin by splitting the left-hand sum as
\[
\sum_{{\begin{substack} {r \equiv a \mod q \\ 0 < |r| \leq 2\sqrt{x}} \end{substack}}} F_{E,r}(x) = \sum_{{\begin{substack} {r \equiv a \mod q \\ 2 < r \leq 2\sqrt{x}} \end{substack}}} F_{E,r}(x) + \sum_{{\begin{substack} {r \equiv a \mod q \\ -2\sqrt{x} < r \leq -3} \end{substack}}} F_{E,r}(x) + O\left( \frac{\sqrt{x}}{\log x} \right).
\]
We will now show that
\begin{equation} \label{finalgoal}
\sum_{{\begin{substack} {r \equiv a \mod q \\ 2 < r \leq 2\sqrt{x}} \end{substack}}} F_{E,r}(x) = \frac{1}{2} \cdot \left( \gd_{a,q} - \frac{\gamma(E,a,q)}{2} \right)\cdot Li(x) \; + \; O_{E}(q \sqrt{x} \log^{3} x),
\end{equation}
the proof that
\[
\sum_{{\begin{substack} {r \equiv a \mod q \\ -2\sqrt{x} < r \leq -3} \end{substack}}} F_{E,r}(x) = \frac{1}{2} \cdot \left( \gd_{a,q} - \frac{\gamma(E,a,q)}{2} \right)\cdot Li(x) \; + \;
O_{E}(q \sqrt{x} \log^{3} x)
\]
being essentially the same. Remembering that $\Phi_E(z)=\phi_E(z)/\phi(0)$, the left-hand side of \eqref{finalgoal} is the limit as $\gd \rightarrow 0^+$ of
\[
\frac{1}{\phi_E(0)} \cdot \sum_{{\begin{substack} {r \equiv a \mod q \\ 2 < r \leq 2\sqrt{x}} \end{substack}}} C_{E,r} \int_{r^2/4 + \gd}^{x + \gd} \frac{\phi_E(r/(2\sqrt{t}))}{2\sqrt{t} \log t} dt.
\]
By partial summation and by integration by parts, the above expression is equal to
\[
- \frac{1}{\phi_E(0)} \cdot \int_{3}^{2\sqrt{x}} \left( \sum_{{\begin{substack} {r \equiv a \mod q \\ 2 < r \leq y} \end{substack}}} C_{E,r} \right) \cdot \frac{d}{dy} \left( \int_{y^2/4+\gd}^{x+\gd} \frac{\phi_E(y/(2\sqrt{t}))}{2\sqrt{t} \log t} dt  \right) dy.
\]
We now invoke the estimate \eqref{asymptotic}, obtaining
\begin{equation} \label{justusedprop3}
\begin{split}
- \gl_E \cdot  \left(\gd_{a,q}-\frac{\gamma(E,a,q)}{2}\right) \cdot & \int_{3}^{2\sqrt{x}} (y-3) \cdot \frac{d}{dy} \left( \int_{y^2/4+\gd}^{x+\gd} \frac{\phi_E(y/(2\sqrt{t}))}{2\sqrt{t} \log t} dt \right) dy \\
+ &O_{E}(q \sqrt{x + \gd} \log^{3} x),
\end{split}
\end{equation}
where
\[
\gl_E :=
\begin{cases}
	2 & \text{ if $E$ has CM} \\
	1 & \text{ if $E$ has no CM}
\end{cases}
\]
and for the error bound, we have used the fact that
\[
\int_{9/4+\gd}^{x+\gd} \frac{\phi_E(3/(2\sqrt{t}))}{2\sqrt{t} \log t} dt \; \ll \; \sqrt{x+\gd}.
\]
Integrating by parts, we see that the integral in the main term is then equal to
\[
\begin{split}
\int_3^{2\sqrt{x}} \int_{y^2/4+\gd}^{x+\gd} \frac{\phi_E(y/(2\sqrt{t}))}{2\sqrt{t} \log t} dt dy \; =& \; \int_{9/4 + \gd}^{x+\gd} \frac{1}{2\sqrt{t} \log t} \int_3^{2\sqrt{t-\gd}}  \phi_E(y/(2\sqrt{t}))\; dy dt \\
=& \; \int_{9/4+\gd}^{x+\gd} \frac{1}{\log t} \int_{3/(2\sqrt{t})}^{2\sqrt{t-\gd}/(2\sqrt{t})} \phi_E(z) \; dz dt  \\
=& \; \int_{9/4+\gd}^{x+\gd} \frac{dt}{\log t} \cdot \int_0^1 \phi_E(z)dz  \; + \;
	O \left( \frac{\sqrt{x+\gd}}{\log x} \right),
\end{split}
\]
where we have made use of the facts that, for (say) $0 \leq \gl \leq 1/2$,
\[
2\pi \cdot \int_0^\gl \phi_E(t) dt = \arcsin(\gl) = \gl + O(\gl^3)
\]
and
\[
\arcsin(1) - \arcsin(1-\gl) = O(\sqrt{\gl}),
\]
which imply that
\[
\int_{9/4+\gd}^{x+\gd} \frac{1}{\log t} \int_{0}^{3/(2\sqrt{t})} \phi_E(z) dz dt \; = \; O\left(\frac{\sqrt{x+\gd}}{\log x}\right)
\]
and
\[
\int_{9/4+\gd}^{x+\gd} \frac{1}{\log t} \int_{1-\gd/t}^1 \phi_E(z) dz dt \; = \; O\left( \frac{\sqrt{\gd} \sqrt{x+\gd}}{\log x} \right).
\]
Inserting this into \eqref{justusedprop3}, using that
\[
\int_0^1 \phi_E(z)dz =
\begin{cases}
	1/4 & \text{ if $E$ has CM} \\
	1/2 & \text{ if $E$ has no CM}
\end{cases}
= \frac{1}{2 \gl_E},
\]
and letting $\gd \rightarrow 0^+$, the asymptotic estimate \eqref{finalgoal} and hence Theorem \ref{consistencythm} is proved.
\end{proof}

\section{Consistency with Sato-Tate} \label{consistencywithsatotate}
In this section we will establish that Conjecture \ref{refinement} implies the Sato-Tate conjecture. We deduce this from the following stronger result which implies that the correcting factor $\Phi_E(z)$ in the main term in \eqref{refconj} is the only possibility, {\it i.e.}, it {\it must} be of the form given in \eqref{phidef2}.

\begin{theorem} \label{satotateth} Let $E$ be an elliptic curve over $\mbq$, $\phi_E(z)$ be defined by \eqref{phidef} and $C>1$ be any constant. Assume that there exists a continuously differentiable function $\Phi: (-1,1) \rightarrow \mbr$ such that, uniformly for $|r| \leq 2\sqrt{x}$ (excluding $r = 0$ if $E$ has CM),
\begin{equation} \label{assumpnoncm}
\pi_{E,r}(x) \; = \;  C_{E,r}\int_{\max\{2,r^2/4\}}^x \frac{\Phi(r/(2\sqrt{t}))}{2\sqrt{t} \log t} dt + O\left(\frac{\sqrt{x}}{(\log x)^C}\right).
\end{equation}
Then the Sato-Tate conjecture (resp. \eqref{cmdistr} if $E$ has CM) holds if and only if $\Phi(z)=\Phi_E(z)$ for all $z\in (-1,1)$, where $\Phi_E(z)$ is defined as in \eqref{phidef2}.
\end{theorem}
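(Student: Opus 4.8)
The plan is to prove Theorem \ref{satotateth} by extracting from the hypothesis \eqref{assumpnoncm} the exact limiting value of the Sato--Tate counting function on intervals bounded away from $0$ and $\pm 1$, and then differentiating. Fix reals $0<\ga<\gb<1$ and set
\[
N(x;\ga,\gb):=\#\{p\le x:\ p\nmid\gD_E,\ \ga\le a_E(p)/(2\sqrt p)\le\gb\}.
\]
For $a_E(p)=r>0$ the condition $\ga\le r/(2\sqrt p)\le\gb$ is equivalent to $r^2/(4\gb^2)\le p\le r^2/(4\ga^2)$, so, up to an error $O(\sqrt x)$ (from the difference between counting primes in a closed interval and a difference of $\pi_{E,r}$'s),
\[
N(x;\ga,\gb)=\sum_{1\le r\le 2\gb\sqrt x}\left(\pi_{E,r}\!\left(\min\{x,r^2/(4\ga^2)\}\right)-\pi_{E,r}\!\left(r^2/(4\gb^2)\right)\right).
\]
A prime counted here is automatically $\le r^2/(4\ga^2)$, so the terms with $r<x^{1/4}$ involve only primes $p\ll_\ga\sqrt x$ and contribute at most $O_\ga(\sqrt x/\log x)$ in total; for $r\ge x^{1/4}$ both arguments of $\pi_{E,r}$ lie in $[x^{1/2}/O_{\ga,\gb}(1),\,x]$, so their logarithms are $\asymp\log x$, the hypothesis \eqref{assumpnoncm} applies at both endpoints, and subtraction produces a main term $C_{E,r}\int_{r^2/(4\gb^2)}^{\min\{x,r^2/(4\ga^2)\}}\Phi(r/(2\sqrt t))/(2\sqrt t\log t)\,dt$ together with an error $O(\sqrt x/(\log x)^C)$ per value of $r$, hence $O(x/(\log x)^C)$ overall. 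Thus $N(x;\ga,\gb)$ equals $\sum_{1\le r\le 2\gb\sqrt x}C_{E,r}\int_{r^2/(4\gb^2)}^{\min\{x,r^2/(4\ga^2)\}}\Phi(r/(2\sqrt t))/(2\sqrt t\log t)\,dt+O(x/(\log x)^C)$.

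Next I would interchange summation and integration: the inequalities $r^2/(4\gb^2)\le t\le r^2/(4\ga^2)$ read $2\ga\sqrt t\le r\le 2\gb\sqrt t$, so the main term becomes $\int_2^x (2\sqrt t\log t)^{-1}\left(\sum_{2\ga\sqrt t\le r\le 2\gb\sqrt t}C_{E,r}\,\Phi(r/(2\sqrt t))\right)dt$ up to negligible boundary contributions. Here Proposition \ref{asymprop} enters: taking $q=1$ in \eqref{asymptotic} (so $\gd_{a,1}=1$ and, in both the CM and non-CM cases, the right-hand side collapses to $\phi_E(0)\cdot B$) gives $\sum_{0<r\le R}C_{E,r}=\phi_E(0)R+O_E((\log R)^3)$, and partial summation of the inner sum against the $C^1$ weight $r\mapsto\Phi(r/(2\sqrt t))$ on $[2\ga\sqrt t,2\gb\sqrt t]$, followed by the substitution $z=r/(2\sqrt t)$, yields
\[
\sum_{2\ga\sqrt t\le r\le 2\gb\sqrt t}C_{E,r}\,\Phi\!\left(\frac{r}{2\sqrt t}\right)=2\sqrt t\,\phi_E(0)\int_\ga^\gb\Phi(z)\,dz+O_{E,\ga,\gb,\Phi}\!\left((\log x)^3\right).
\]
Substituting back, the error integrates to $O(\sqrt x(\log x)^2)=o(x/(\log x)^C)$ since $C>1$, and one is left with $N(x;\ga,\gb)=\phi_E(0)\left(\int_\ga^\gb\Phi(z)\,dz\right)Li(x)+O(x/(\log x)^C)$, whence, dividing by $\pi(x)$ and using $Li(x)\sim\pi(x)$,
\[
\lim_{x\to\infty}\frac{N(x;\ga,\gb)}{\pi(x)}=\phi_E(0)\int_\ga^\gb\Phi(z)\,dz\qquad(0<\ga<\gb<1).
\]
The same identity for $-1<\ga<\gb<0$ follows verbatim, applying Proposition \ref{asymprop} to an interval of negative $r$.

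Finally I would read off the equivalence. When $0\notin[\ga,\gb]$ the quantity $N(x;\ga,\gb)$ is precisely $\#\{p\le x:\ p\nmid\gD_E,\ a_E(p)/(2\sqrt p)\in[\ga,\gb]\}$. If $\Phi=\Phi_E$, then since $\Phi_E=\phi_E/\phi_E(0)$ the limit above is $\int_\ga^\gb\phi_E(z)\,dz$, so the Sato--Tate conjecture (resp.\ \eqref{cmdistr} in the CM case) holds on every closed sub-interval of $(0,1)\cup(-1,0)$; the remaining sub-intervals of $[-1,1]$ (those meeting $0$, or with an endpoint $\pm1$) follow by a routine monotone/splitting approximation, using \eqref{assumpnoncm} with $r=0$ in the non-CM case to see that the atom at the origin has density $0$. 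Conversely, if the Sato--Tate conjecture (resp.\ \eqref{cmdistr}) holds, then $\phi_E(0)\int_\ga^\gb\Phi(z)\,dz=\int_\ga^\gb\phi_E(z)\,dz$ for all $0<\ga<\gb<1$; fixing $\ga$ and differentiating in $\gb$, legitimate because $\Phi$ and $\phi_E$ are continuous, gives $\phi_E(0)\Phi(\gb)=\phi_E(\gb)$, i.e.\ $\Phi(\gb)=\Phi_E(\gb)$, for $\gb\in(0,1)$, and similarly on $(-1,0)$; continuity of $\Phi$ and of $\Phi_E$ at $0$ then forces $\Phi=\Phi_E$ on all of $(-1,1)$.

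The main obstacle is the uniform bookkeeping of the error terms when \eqref{assumpnoncm} is summed over the whole range $1\le r\le 2\gb\sqrt x$: one must isolate the small values of $r$, for which the truncation $\max\{2,r^2/4\}$ in \eqref{assumpnoncm} is active and the relevant logarithms are not of size $\log x$ (this is exactly the point at which one uses that a small $r$ forces a short range of admissible primes $p$), and one must check that the $(\log x)^3$ loss from the error term of Proposition \ref{asymprop} in the CM case, once weighted by $dt/(2\sqrt t\log t)$ and integrated, stays below $x/(\log x)^C$. A secondary, purely routine, nuisance is the passage from equidistribution on closed sub-intervals of $(0,1)\cup(-1,0)$ to equidistribution on all of $[-1,1]$. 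The differentiation step, which is the conceptual heart of the argument, is immediate.
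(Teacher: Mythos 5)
Your proposal is correct and follows essentially the same route as the paper's proof: split the count of primes with $\ga \le a_E(p)/(2\sqrt p) \le \gb$ into differences of $\pi_{E,r}$, apply the hypothesis \eqref{assumpnoncm} and interchange sum and integral, evaluate the inner sum by partial summation against the $C^1$ weight using Proposition \ref{asymprop} with $q=1$ (where both cases collapse to $\phi_E(0)\cdot B$), and conclude by continuity/differentiation that the limiting density determines $\Phi$. Your extra bookkeeping (isolating $r<x^{1/4}$, and the sandwich argument extending equidistribution to intervals meeting $0$ or $\pm1$) only makes explicit steps the paper leaves implicit.
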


\begin{proof}
By continuity of $\Phi$, we have $\Phi(z)=\Phi_E(z)$ for all $z\in (-1,1)$ if and only if
\begin{equation} \label{equalints}
\int\limits_{\alpha}^{\beta} \Phi(z)dz=\int\limits_{\alpha}^{\beta} \Phi_E(z)dz
\end{equation}
for all $\alpha,\beta$ with $-1< \alpha <\beta< 1$ and $0\not\in [\alpha,\beta]$. Moreover, the equation \eqref{equalints} is equivalent with
$$
\int\limits_{\alpha}^{\beta} \phi(z)dz=\int\limits_{\alpha}^{\beta} \phi_E(z)dz,
$$
where we set
$$
\phi(z):=\phi_E(0)\Phi(z),
$$
and $\phi_E(z)$ is defined as in \eqref{phidef}.
Therefore, to establish the equivalence claimed in the Theorem, it suffices to prove that if \eqref{assumpnoncm} holds, then
\begin{equation} \label{suff}
\sum_{{\begin{substack} { p \leq x \\ \ga \leq \frac{a_E(p)}{2\sqrt{p}} < \gb} \end{substack}}} 1 \sim Li(x)\cdot \int_{\ga}^{\gb}\phi(z)dz \quad \mbox{ as } x\rightarrow\infty
\end{equation}
for all fixed $\alpha,\beta$ satisfying $-1< \alpha <\beta< 1$ and $0\not\in [\alpha,\beta]$.
In the sequel, we assume that $0<\alpha<\beta< 1$. In the complementary case
$-1< \alpha<\beta<0$, \eqref{suff} can be proved similarly.

We note that, for $a_E(p) > 0$ one has
\[
\ga \leq \frac{a_E(p)}{2\sqrt{p}} < \gb \quad \Longleftrightarrow \quad \frac{a_E(p)^2}{4\gb^2} < p \leq \frac{a_E(p)^2}{4\ga^2}.
\]
Thus,
\begin{equation} \label{splitting}
\begin{split}
\sum_{{\begin{substack} { p \leq x \\ \ga \leq \frac{a_E(p)}{2\sqrt{p}} < \gb} \end{substack}}} 1 \; = \; &\sum_{{\begin{substack} {0 < r \leq 2\sqrt{x}\ga} \end{substack}}} \left( \pi_{E,r}\left( \frac{r^2}{4\ga^2} \right) - \pi_{E,r}\left( \frac{r^2}{4\gb^2} \right) \right) \\
+ \; &\sum_{{\begin{substack} {2\sqrt{x}\ga < r \leq 2\sqrt{x}\gb} \end{substack}}} \left( \pi_{E,r}\left( x \right) - \pi_{E,r}\left( \frac{r^2}{4\gb^2} \right) \right).
\end{split}
\end{equation}
We observe that \eqref{suff} follows from \eqref{assumpnoncm}, \eqref{splitting} and the asymptotic estimate
\begin{equation} \label{formerprop}
\begin{split}
&  \sum_{{\begin{substack} { 2 < r \leq 2\sqrt{x}\ga } \end{substack}}} \frac{C_{E,r}}{\phi_E(0)} \int_{r^2/4\gb^2}^{r^2/4\ga^2} \frac{\phi(r/(2\sqrt{t}))}{2\sqrt{t}\log t}dt + \sum_{{\begin{substack} { 2\sqrt{x}\ga < r \leq 2\sqrt{x}\gb } \end{substack}}} \frac{C_{E,r}}{\phi_E(0)} \int_{r^2/4\gb^2}^{x} \frac{\phi(r/(2\sqrt{t}))}{2\sqrt{t}\log t}dt \\
&\quad\quad\quad\quad\quad\quad\quad = Li(x) \cdot \int_{\ga}^{\gb}\phi(z)dz  \; + \;
	O_{\ga, \gb} \left(\sqrt{x} \log^{2} x \right),
\end{split}
\end{equation}
which we shall prove in the following.  Reversing the order of summation and integration, the left-hand side of \eqref{formerprop} becomes
\begin{equation} \label{red}
\int\limits_2^x \frac{1}{2\sqrt{t}\log t} \left(\sum\limits_{2\alpha\sqrt{t}<r\le 2\beta\sqrt{t}} \frac{C_{E,r}}{\phi_E(0)} \cdot \phi\left(\frac{r}{2\sqrt{t}}\right)\right)dt\ +O_{\alpha,\beta}(1).
\end{equation}
We note that by $\delta_{1,0}=1$ and the definition of $\phi_E(z)$ in \eqref{phidef}, the main term on the right-hand side of \eqref{asymptotic} coincides with $\phi(0)\cdot B$ if $q=1$ and $a=0$. Now using
partial summation, Proposition \ref{asymprop} with $q=1,a=0$, and integration by parts, we have
\begin{eqnarray*} \label{parsu}
& & \sum\limits_{2\alpha\sqrt{t}<r\le 2\beta\sqrt{t}} C_{E,r}\cdot \phi\left(\frac{r}{2\sqrt{t}}\right)\\ &=&
\phi(\beta)\sum\limits_{2\alpha\sqrt{t}<r\le 2\beta\sqrt{t}} C_{E,r} -
\int\limits_{2\alpha\sqrt{t}}^{2\beta\sqrt{t}}
\left(\sum\limits_{2\alpha\sqrt{t}<r\le y}C_{E,r}\right) \frac{d}{dy} \phi\left(\frac{y}{2\sqrt{t}}\right)\ dy\\ &=&
\phi_E(0)\cdot (2\beta\sqrt{t}-2\alpha\sqrt{t})\phi(\beta) -
\phi_E(0)\int\limits_{2\alpha\sqrt{t}}^{2\beta\sqrt{t}}
(y-2\alpha\sqrt{t}) \frac{d}{dy} \phi\left(\frac{y}{2\sqrt{t}}\right)\ dy  +
	O_{\ga, \gb} \left(\log^3 t\right) \\
&=& \phi_E(0) \cdot \int\limits_{2\alpha\sqrt{t}}^{2\beta\sqrt{t}} \phi\left(\frac{y}{2\sqrt{t}}\right)dy  \; + \;
	O_{\ga, \gb} \left(\log^3 t\right)
\end{eqnarray*}
where for the estimation of the error term, we have used that the derivative of $\phi$ is continuous and hence bounded on $[\alpha,\beta]$. Thus, \eqref{red} equals
$$
\int\limits_2^x \frac{1}{2\sqrt{t}\log t} \int\limits_{2\alpha\sqrt{t}}^{2\beta\sqrt{t}} \phi\left(\frac{y}{2\sqrt{t}}\right)dydt   \; + \;
	O_{\ga, \gb} \left(\sqrt{x} \log^{2} x \right)
$$
Making the change of variables $y/(2\sqrt{t}) \rightarrow z$, the main term above becomes
$$
Li(x)\cdot \int_{\ga}^{\gb}\phi(z)dz,
$$
which proves \eqref{formerprop} and hence Theorem \ref{satotateth}.
\end{proof}

\section{Numerical evidence} \label{numericalevidence}

We conclude with some supporting numerical evidence.  The five figures below display data for the single elliptic curve $E$ given by the Weierstrass equation
\[
Y^2 = X^3 + 6X - 2.
\]

In Figure \ref{fig:fm1}, we plot the function $v := \pi_{E,r}(4\cdot 10^7)$ as a function of the variable $r$.  In Figure \ref{fig:f0}, we plot our approximation $v := F_{E,r}(4\cdot 10^7)$ as a function of $r$.  This elliptic curve has $m_E = 6$, and the ``main factor''
\[
\frac{m_E \cdot |\gal(\mbq(E[m_E])/\mbq)_r|}{|\gal(\mbq(E[m_E])/\mbq)|}
\]
of the constant $C_{E,r}$ takes on $4$ distinct values $\{ 1/2, 3/4, 9/8, 7/4 \}$ as $r$ ranges over the integers, which accounts for the $4$ distinct bands visible in Figures \ref{fig:fm1} and \ref{fig:f0}.

\begin{figure}
\centering
\scalebox{0.38}
{\includegraphics{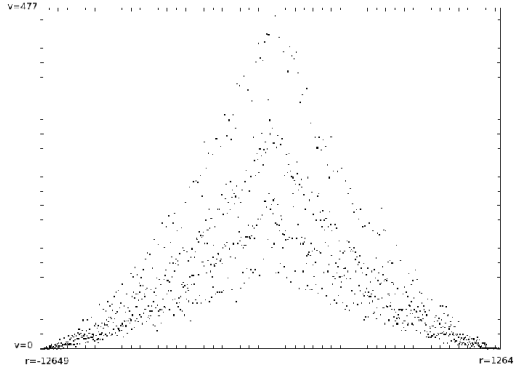}}
\caption{The function $v = \pi_{E,r}(4 \cdot 10^7)$, as a function of $r$}
\label{fig:fm1}
\end{figure}

\begin{figure}
\centering
\scalebox{0.38}
{\includegraphics{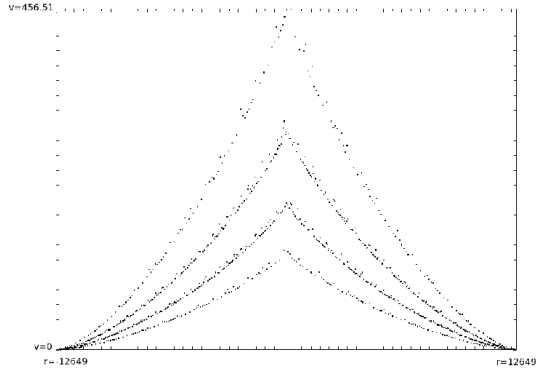}}
\caption{The approximation $v = F_{E,r}(4 \cdot 10^7)$, as a function of $r$}
\label{fig:f0}
\end{figure}

We then plot various forms of the error in the approximation.  In Figure \ref{fig:f1}, we plot the absolute error
\[
v = \pi_{E,r}(4 \cdot 10^7) - F_{E,r}(4 \cdot 10^7),
\]
while in Figure \ref{fig:f2}, we plot the relative error
\[
v = \frac{ \pi_{E,r}(4 \cdot 10^7) - F_{E,r}(4 \cdot 10^7)}{F_{E,r}(4\cdot 10^7)}.
\]

\begin{figure}
\centering
\scalebox{0.38}
{\includegraphics{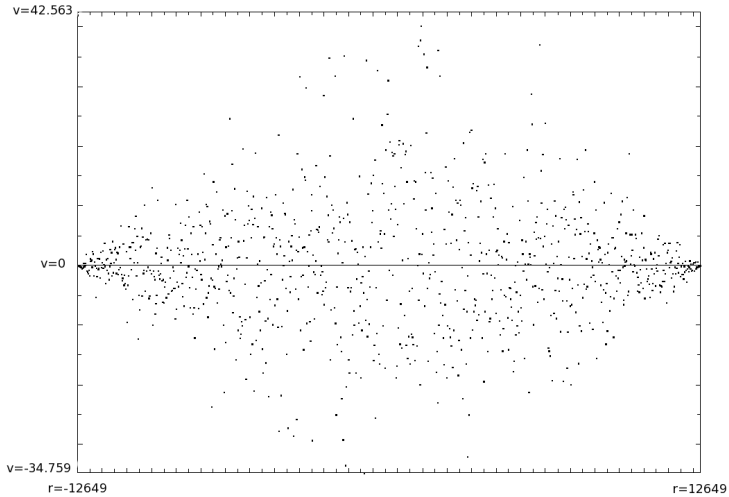}}
\caption{The absolute error $v = \pi_{E,r}(4 \cdot 10^7) - F_{E,r}(4\cdot 10^7)$}
\label{fig:f1}
\end{figure}

\begin{figure}
\centering
\scalebox{0.38}
{\includegraphics{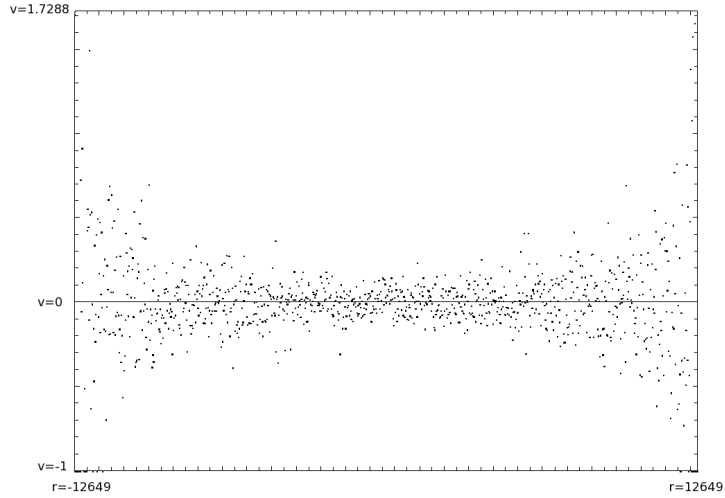}}
\caption{The relative error $v = \frac{ \pi_{E,r}(4 \cdot 10^7) - F_{E,r}(4\cdot 10^7)}{F_{E,r}(4\cdot 10^7)}$}
\label{fig:f2}
\end{figure}

Note that the absolute (resp. relative) error is significantly smaller (resp. larger) at the ends of the graph than in the middle.  This comes from the fact that we are approximating an integer valued function with a continuous one.

We remark that in practice, the main difficulty in obtaining numerical data on these error terms lies in the constants $C_{E,r}$, which are difficult to compute in general.  However, the elliptic curve we are considering is a Serre curve (see \cite[p. 318]{serre} and also \cite[p. $51$]{langtrotter}), so we may use Proposition $11$ of \cite{jones}, which computes $C_{E,r}$ explicitly for any Serre curve.

In Figure \ref{fig:f3} we plot the error relative to square root of the main term, which looks remarkably like random noise.

\begin{figure}
\centering
\scalebox{0.38}
{\includegraphics{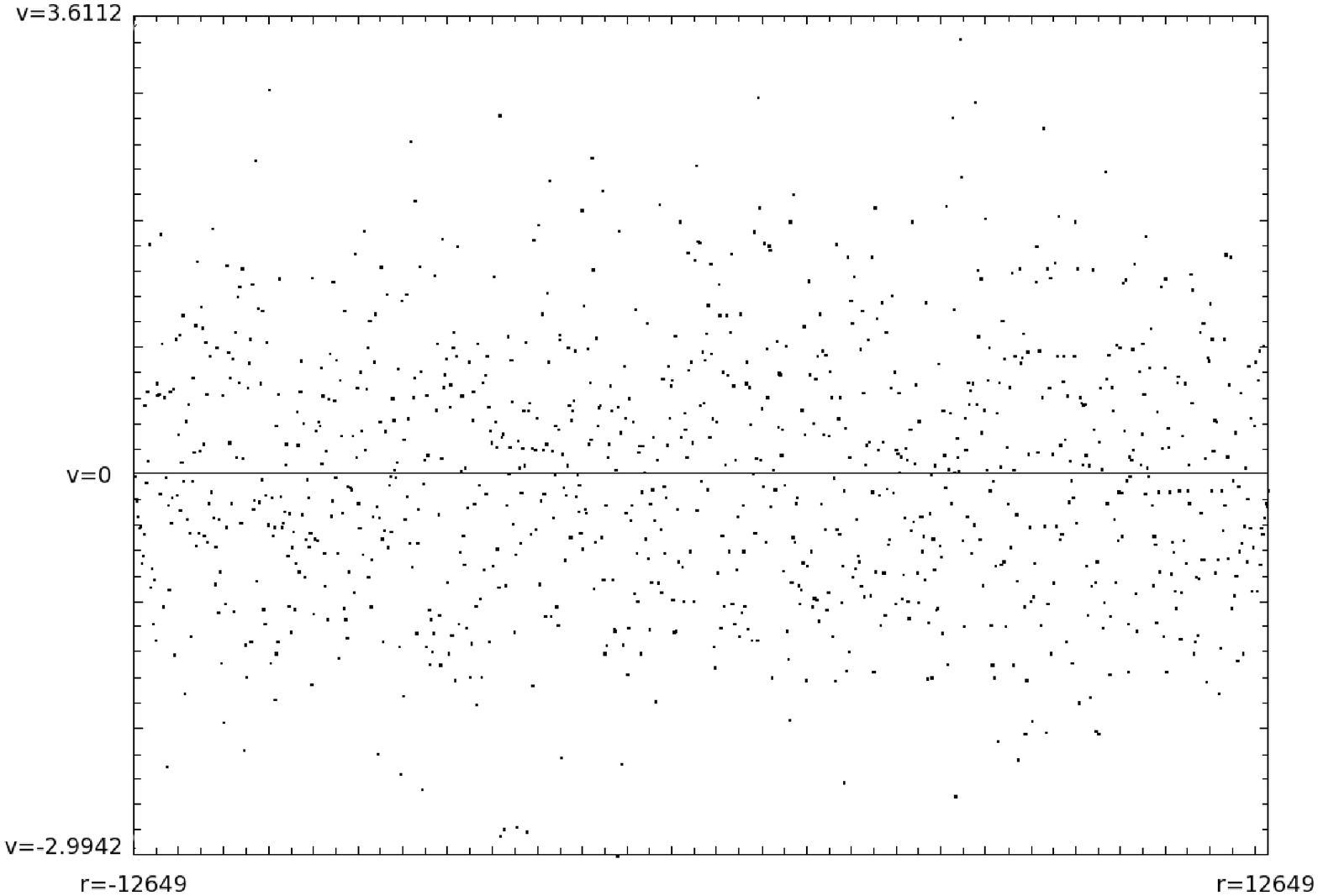}}
\caption{$v = \frac{\text{Error}}{\sqrt{\text{main term}}}
\; = \; \frac{ \pi_{E,r}(4 \cdot 10^7) - F_{E,r}(4\cdot 10^7) }{\sqrt{F_{E,r}(4\cdot 10^7)}}$}
\label{fig:f3}
\end{figure}

Finally, in Figures \ref{fig:cmfm1} -- \ref{fig:cmf3} we plot the corresponding data for the elliptic curve $E$ given by the Weierstrass equation
\[
Y^2 = X^3 - 768108000X + 8194304162000,
\]
which has CM by the complex order of discriminant $-27$ (i.e. by the unique order of index $3$ in $\mbz[1/2 + \sqrt{-3}/2]$).

\begin{figure}
\centering
\scalebox{0.38}
{\includegraphics{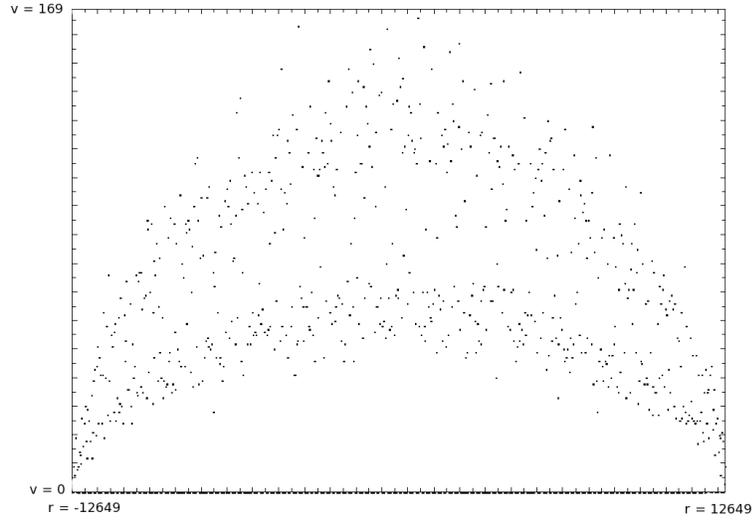}}
\caption{The function $v = \pi_{E,r}(4 \cdot 10^7)$, as a function of $r$}
\label{fig:cmfm1}
\end{figure}

\begin{figure}
\centering
\scalebox{0.38}
{\includegraphics{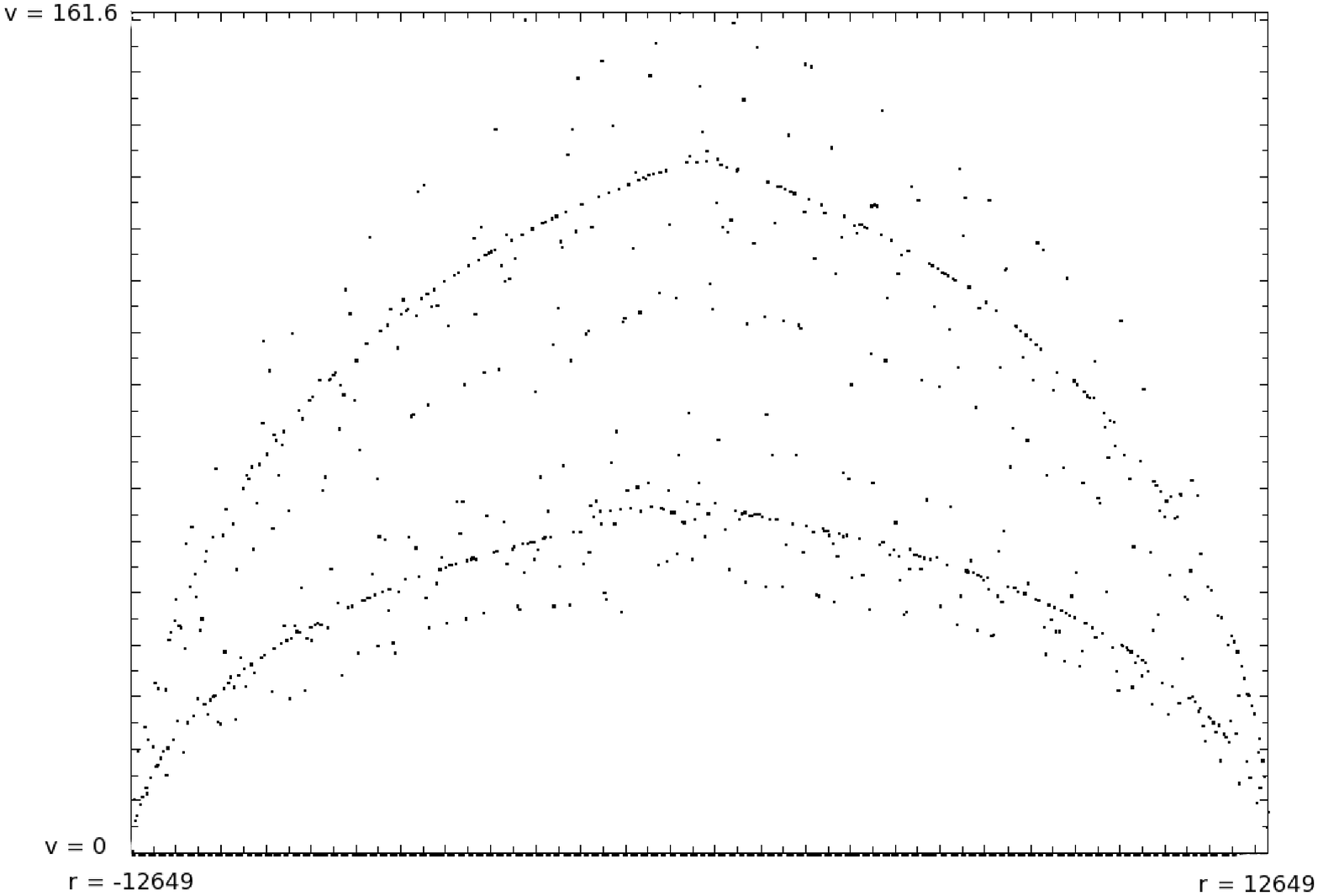}}
\caption{The approximation $v = F_{E,r}(4\cdot 10^7)$, as a function of $r$}
\label{fig:cmf0}
\end{figure}

\begin{figure}
\centering
\scalebox{0.38}
{\includegraphics{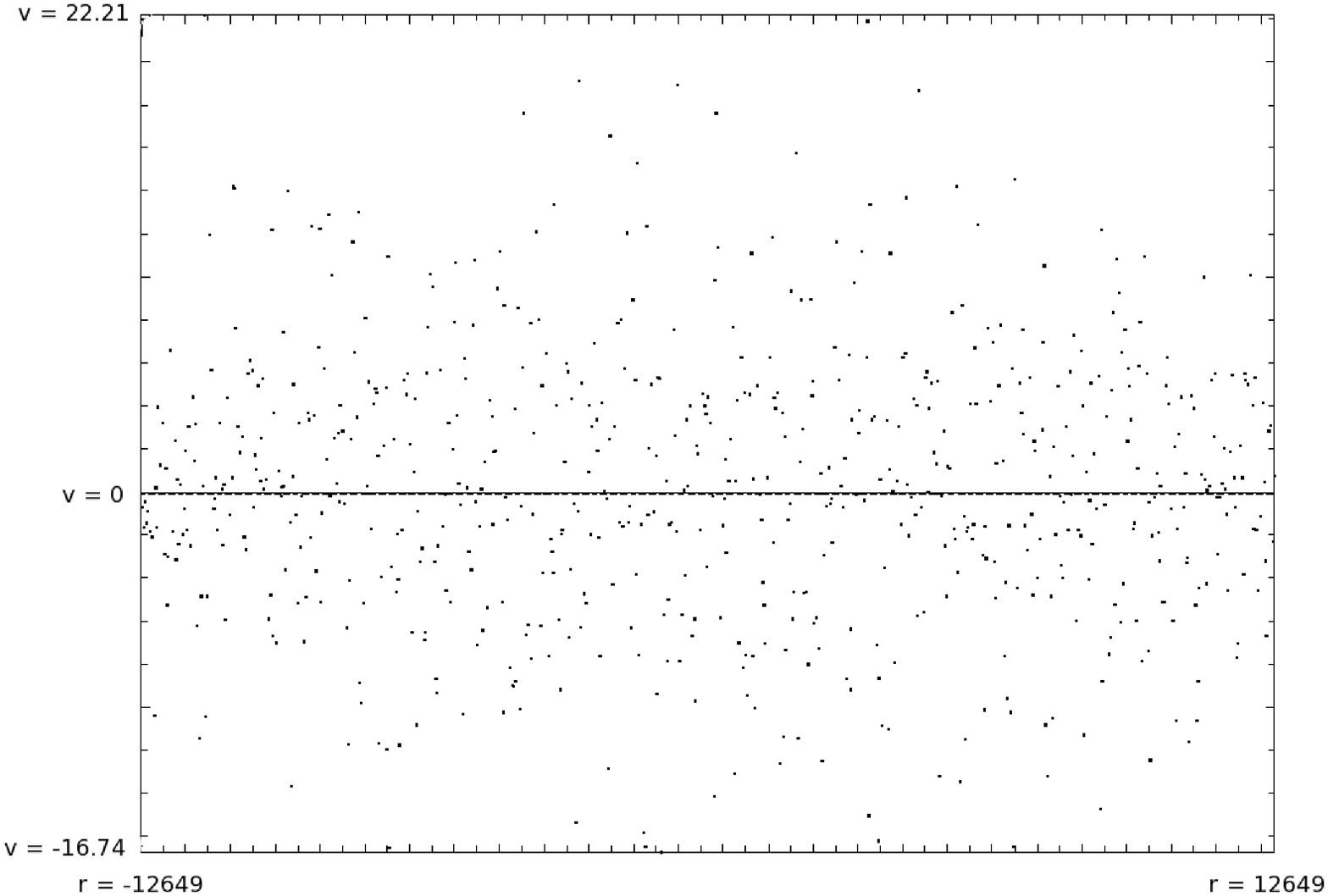}}
\caption{The absolute error $v = \pi_{E,r}(4 \cdot 10^7) - F_{E,r}(4\cdot 10^7)$}
\label{fig:cmf1}
\end{figure}

\begin{figure}
\centering
\scalebox{0.38}
{\includegraphics{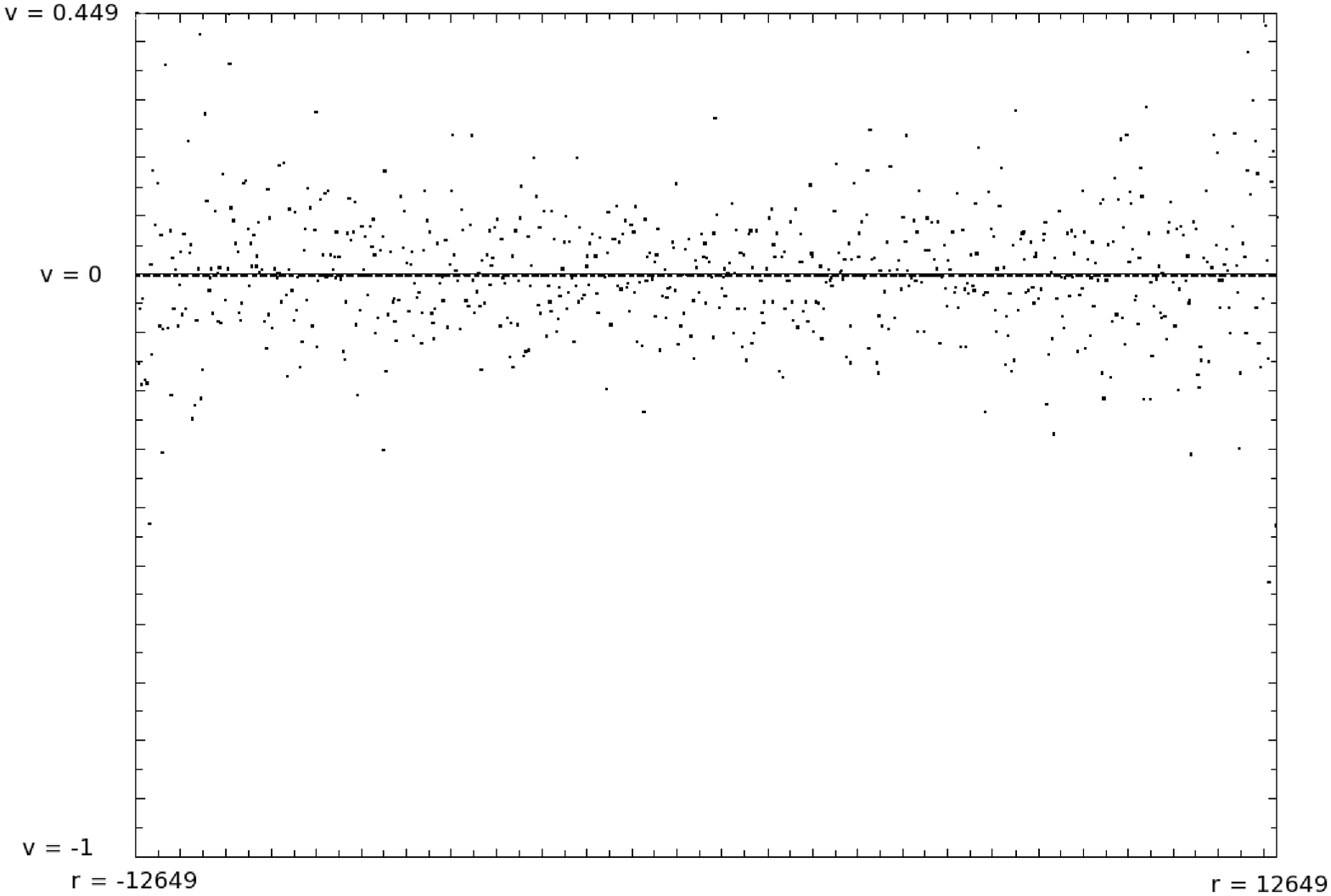}}
\caption{The relative error $v = \frac{ \pi_{E,r}(4 \cdot 10^7) - F_{E,r}(4 \cdot 10^7)}{F_{E,r}(4\cdot 10^7)}$}
\label{fig:cmf2}
\end{figure}

\begin{figure}
\centering
\scalebox{0.38}
{\includegraphics{}}
\caption{$v = \frac{\text{Error}}{\sqrt{\text{main term}}}
\; = \; \frac{ \pi_{E,r}(4 \cdot 10^7) - F_{E,r}(4 \cdot 10^7)}{ \sqrt{F_{E,r}(4 \cdot 10^7)}}$}
\label{fig:cmf3}
\end{figure}

\end{document}